\documentclass[11pt]{article}

\usepackage{enumerate, amsmath, amsthm, amsfonts, amssymb, xy,  mathrsfs, graphicx, paralist, lscape}
\usepackage[usenames, dvipsnames]{color}
\usepackage[margin=1in]{geometry} 
\usepackage[colorlinks=true, urlcolor=blue, linkcolor=blue, citecolor=blue]{hyperref}

\input xy
\xyoption{all}

\numberwithin{equation}{section}
\newtheorem{theorem}[equation]{Theorem}

\newtheorem{proposition}[equation]{Proposition}
\newtheorem{lemma}[equation]{Lemma}

\newtheorem{conjecture}[equation]{Conjecture}

\theoremstyle{definition}
\newtheorem{rmk}[equation]{Remark}
\newenvironment{remark}[1][]{\begin{rmk}[#1] \pushQED{\qed}}{\popQED \end{rmk}}
\newtheorem{eg}[equation]{Example}
\newenvironment{example}[1][]{\begin{eg}[#1] \pushQED{\qed}}{\popQED \end{eg}}
\newtheorem{defn}[equation]{Definition}
\newenvironment{definition}[1][]{\begin{defn}[#1]\pushQED{\qed}}{\popQED \end{defn}}

\newcommand{\bF}{\mathbf{F}}

\newcommand{\rH}{\mathrm{H}}

\newcommand{\bK}{\mathbf{K}}

\newcommand{\bL}{\mathbf{L}}

\newcommand{\bO}{\mathbf{O}}

\newcommand{\bS}{\mathbf{S}}

\newcommand{\rS}{\mathrm{S}}

\newcommand{\bZ}{\mathbf{Z}}

\newcommand{\fg}{\mathfrak{g}}




\renewcommand{\phi}{\varphi}
\renewcommand{\emptyset}{\varnothing}

\renewcommand{\tilde}[1]{\widetilde{#1}}

\newcommand{\ul}[1]{\underline{#1}}
\newcommand{\DS}{\displaystyle}

\newcommand{\arxiv}[1]{\href{http://arxiv.org/abs/#1}{{\tt arXiv:#1}}}

\makeatletter
\def\Ddots{\mathinner{\mkern1mu\raise\p@
\vbox{\kern7\p@\hbox{.}}\mkern2mu
\raise4\p@\hbox{.}\mkern2mu\raise7\p@\hbox{.}\mkern1mu}}
\makeatother

\DeclareMathOperator{\im}{image}

\renewcommand{\hom}{\operatorname{Hom}}

\newcommand{\GL}{\mathbf{GL}}

\newcommand{\SO}{\mathbf{SO}}


\newcommand{\fgl}{\mathfrak{gl}}

\newcommand{\fpe}{\mathfrak{pe}}

\newcommand{\JPW}{\mathbf{JPW}}
\newcommand{\uJPW}{\ul{\rm JPW}}
\newcommand{\La}{\mathbf{La}}
\newcommand{\uLa}{\ul{\rm La}}

\usepackage{tikz}

\title{Derived supersymmetries of determinantal varieties}
\author{Steven V Sam}
\date{July 13, 2012}


\begin{document}

\maketitle

\begin{abstract}
We show that the linear strands of the Tor of determinantal varieties in spaces of symmetric and skew-symmetric matrices are irreducible representations for the periplectic (strange) Lie superalgebra. The structure of these linear strands is explicitly known, so this gives an explicit realization of some representations of the periplectic Lie superalgebra. This complements results of Pragacz and Weyman, who showed an analogous statement for the generic determinantal varieties and the general linear Lie superalgebra. We also give a simpler proof of their result. Via Koszul duality, this is an odd analogue of the fact that the coordinate rings of these determinantal varieties are irreducible representations for a certain classical Lie algebra.
\end{abstract}

\tableofcontents

\section*{Introduction.}

In this paper, we are interested in the symmetries of three classes of varieties known as determinantal varieties. Let $E, F$ be vector spaces. We consider the space of generic matrices $\hom(E,F)$, symmetric matrices $S^2 E$, and skew-symmetric matrices $\bigwedge^2 E$. These carry natural actions of general linear groups $\GL(E) \times \GL(F)$, $\GL(E)$, and $\GL(E)$, respectively, via change of basis of the vector spaces. The orbits under these group actions are classified by the rank of the matrix, and the orbit closures are the determinantal varieties. The algebraic and geometric properties of these varieties have been intensely studied in the past (see \cite{brunsvetter} for a general reference), and the group action is incredibly useful as a computational and theoretical tool since one gets induced actions on all ``functorial'' constructions, such as the coordinate ring, the minimal free resolution, the local cohomology, etc.

These induced actions are an ``obvious symmetry'' and the point of departure of this paper is that there are additional ``hidden symmetries''. Before explaining our results, we highlight some of the previous literature. First, the group actions can be replaced by infinitesimal actions of their Lie algebras $\fgl(E)\times \fgl(F)$ and $\fgl(E)$. For the coordinate ring of a determinantal variety in $\hom(E,F)$, there is an action of $\hom(E,F)^* = \hom(F,E)$, which is multiplication by linear forms, and $\hom(F,E)$ is the lower triangular part of the block decomposition of the larger Lie algebra $\fgl(E \oplus F)$, while $\fgl(E) \times \fgl(F)$ forms the block diagonal part. It was shown by Levasseur and Stafford \cite{levasseur} that (after a suitable character twist of the action of $\fgl(E) \times \fgl(F)$) one can extend the above two actions to an action of the whole Lie algebra $\fgl(E \oplus F)$ by having $\hom(E,F)$ act by certain differential operators on the coordinate ring of a determinantal variety. Furthermore, the coordinate ring becomes a (non-integrable) {\it irreducible} highest weight representation of $\fgl(E \oplus F)$. \cite{levasseur} also handled determinantal varieties in the space of symmetric or skew-symmetric matrices (the large Lie algebra $\fgl(E \oplus F)$ is replaced by either a symplectic or orthogonal Lie algebra, respectively). These varieties are related to the classical Hermitian symmetric pairs. The results were extended in \cite{tanisaki} to all Hermitian symmetric pairs, and explicit formulas for the differential operators and highest weights are given. 

Enright and Willenbring \cite{enrightwillenbring} showed that the action of the large Lie algebra carries over to the entire minimal free resolution of the determinantal varieties (it is an analogue of the Bernstein--Gelfand--Gelfand resolution), and the extension to all Hermitian symmetric pairs appears in \cite{enrighthunziker}. One could think of this as a ``vertical'' hidden symmetry. In fact, there is an additional ``horizontal'' hidden symmetry on the minimal free resolution of a determinantal variety. More specifically, on the linear strands of the resolution, there is an action of $\hom(E,F)$ given by applying the differentials. This time, one interprets $\hom(E,F)$ as the lower-triangular part of the block decomposition of the Lie {\it super}algebra $\fgl(E|F)$. Again, $\fgl(E) \times \fgl(F)$ forms the block diagonal part. It was shown by Pragacz and Weyman \cite{pw} (see also \cite{aw1, aw2, aw3} for further developments) that one can extend the above two actions to an action of the whole Lie superalgebra $\fgl(E|F)$ (again after a suitable character twist). The linear strands tensored with the residue field (i.e., Tor) become irreducible highest weight representations of $\fgl(E|F)$, with the exception of the degenerate case of rank 0 matrices, i.e., the Koszul complex.

We remark that some other interactions between Lie superalgebras and free resolutions (related to degeneracy loci) appear in \cite[\S A.6]{pragacz} and \cite{sam}.

The goal of this paper is to give a simpler proof of the existence of this superalgebra action (Theorem~\ref{thm:lascoux_action}) on the Tor of the determinantal varieties in $\hom(E,F)$ as well as to construct an analogous action (Theorem~\ref{thm:periplecticaction}) for the Tor of the (skew-)symmetric determinantal varieties (it turns out these two cases essentially collapse to one) as well as some other modules supported in the determinantal varieties which are related to classical invariant theory (Remark~\ref{rmk:invttheory}). The simplest non-trivial case is given in Example~\ref{eg:s=1}. The superalgebra $\fgl(E|F)$ is replaced by the periplectic Lie algebra, which is a super analogue of the symplectic Lie algebra. A foreshadowing of this result and some small cases are contained in \cite{jpw}, but as in \cite{pw}, the word ``superalgebra'' never appears.

The main idea of the paper is to use a variant of ``Weyl's construction'' for representations of the classical groups \cite[\S 17.3, \S 19.5]{fultonharris}. The idea is to start with the vector representation of a classical group, apply a Schur functor to it, and then mod out by the image of a map from a smaller Schur functor to obtain an irreducible representation. By semisimplicity, one could instead map to a smaller Schur functor and take the kernel. For the superalgebras of interest in this paper, the main point is that the vector representations, and their duals, can be interpreted as 2-term chain complexes, and we can construct Schur complexes (see \cite{abw} or \cite[\S 2.4]{weyman}) from them. One point of difficulty is that these superalgebras do not have a semisimple representation theory, so we have to combine the two approaches to Weyl's construction. The Schur complexes give super analogues of most of the rich combinatorics and invariant theory that one associates with Schur functors. In particular, they are the irreducible polynomial representations for the superalgebra $\fgl(E|F)$, which were studied in \cite{bereleregev}. In fact, the category of polynomial representations is semisimple. In our construction, we mix the Schur complexes on the vector representation with the Schur complexes on the dual vector representation, so we leave the polynomial category. Also, the vector representation of the periplectic Lie superalgebra is isomorphic to its dual up to grading shift. So the Weyl construction above is a transition from the classical semisimple setting.

We finish the introduction with an outline of the paper. In \S\ref{sec:liesuper}, we give concrete realizations of the two classes of Lie superalgebras that we will discuss. In particular, they are $\bZ$-graded, and in \S\ref{sec:Zgraded}, we introduce some notation to think about $\bZ$-graded representations in terms of chain complexes. In \S\ref{sec:skewsymminors} we state and prove that the linear strands of the Tor of the (skew-)symmetric determinantal varieties are irreducible representations for the periplectic superalgebra. In \S\ref{sec:genericminors} we discuss the case of generic determinantal varieties. The proofs are similar, and the result in this case is already known, so we only mention the differences from \S\ref{sec:skewsymminors}.

\subsection*{Notation.}

\begin{compactitem}[$\bullet$]
\item $\bZ$ denotes the set of all integers

\item We work over a field $K$ of characteristic 0 throughout. Unless otherwise stated, all multilinear operations are taken with respect to $K$. We use $\rS^k$ and $\bigwedge^k$ to denote symmetric and exterior powers, respectively. We also use the notation $\rS^\bullet = \bigoplus_{k \ge 0} \rS^k$ and $\bigwedge^\bullet = \bigoplus_{k \ge 0} \bigwedge^k$. 

\item We use $\det$ to denote the top exterior power of a vector space. The dual of a vector space $E$ is denoted $E^*$. Given two vector spaces $E,F$, we use $\hom(E,F)$ to denote the space of all linear maps $E \to F$. Given a matrix $\phi$, we use $\phi^T$ to denote the transpose matrix. The general linear group is $\GL(E)$, and its Lie algebra is $\fgl(E)$.

\item A partition $\lambda = (\lambda_1, \lambda_2, \dots)$ is a weakly decreasing sequence of nonnegative integers with $|\lambda| := \sum_i \lambda_i < \infty$. We use $\ell(\lambda)$ to denote the largest $r$ such that $\lambda_r > 0$. We visualize partitions by Young diagrams: left-justified collections of boxes such that there are $\lambda_i$ boxes in the row $i$ (counted from top to bottom). The transpose partition $\lambda^\dagger$ is the partition obtained by counting column lengths of the Young diagram of $\lambda$. We will also use exponential notation for partitions, i.e., $s^d$ denotes the sequence $(s, s, \dots, s)$ ($d$ times). We also make use of skew Young diagrams: if the diagram of $\mu$ is a subset of the diagram of $\lambda$, i.e., $\mu_i \le \lambda_i$ for all $i$, then $\lambda / \mu$ is the set complement of these diagrams.

\item Given a partition $\lambda$, $\bS_\lambda$ denotes the Schur functor associated to $\lambda$. This follows the notation $\bK_\lambda$ in \cite[\S 2.1]{weyman}. In particular, if $\ell(\lambda)=1$, then $\bS_\lambda = \rS^{|\lambda|}$ is a symmetric power, and if $\ell(\lambda^\dagger)=1$, then $\bS_\lambda = \bigwedge^{|\lambda|}$ is an exterior power. The functor $\bL_\lambda$ in \cite[\S 2.1]{weyman} is isomorphic to $\bS_{\lambda^\dagger}$ since we work over a field of characteristic 0. We will also use skew Schur functors $\bS_{\lambda / \mu}$ which can also be found in \cite[\S 2.1]{weyman}.

\item Given a graded $K$-algebra $A$, and vector spaces $U,V$, we will use the notation
\[
U \otimes A(-i) \to V \otimes A
\]
to denote a map between free $A$-modules such that in matrix form, the entries consist of homogeneous elements in $A$ of degree $i$. We might also write $U \otimes A(-i-j) \to V \otimes A(-j)$ if we need to compose such maps.

\item All complexes $\bF_\bullet$ are graded in homological notation, i.e., the differential $d \colon \bF_i \to \bF_{i-1}$ lowers the degree of the index. The dual complex $\bF_\bullet^*$ has grading $\bF_i^* = \bF_{-i}$. We denote homological grading shift by $[i]$, i.e., $\bF[i]_j = \bF_{i+j}$.

\item A superspace is a $\bZ/2$-graded vector space $V$, and we will write it as $V_0 \oplus V_1$. The dimension of $V$ is $(\dim V_0 | \dim V_1)$. We will use $V[1]$ to denote the superspace $V_1 \oplus V_0$.

\item Given a complex $\bF_\bullet$ or a superspace $V$, we can define Schur complexes $\bS_\lambda(\bF_\bullet)$ or super analogues of Schur functors $\bS_\lambda V$. The construction is analogous in both cases, and references for this construction are \cite[\S V]{abw} and \cite[\S 2.4]{weyman}. Skew versions will also be used.
\end{compactitem}

\subsection*{Acknowledgements.}

I thank Jerzy Weyman for suggesting this problem. I thank Andrew Snowden and Jerzy Weyman for carefully reading a draft of this article. 

The author was supported by an NDSEG fellowship while this work was done.

\section{Lie superalgebras.} \label{sec:liesuper}

We will not give the general definitions of Lie superalgebras. Rather, in this section, we just give concrete realizations of the superalgebras that will appear in this paper. For general background, we refer the reader to \cite{kac, kacsketch}.

\subsection{General linear Lie superalgebras.}

Fix positive integers $n,m$ and consider the space of $(n+m) \times (n+m)$ block matrices 
\[
\fgl(n|m) = \left\{ \begin{pmatrix} A & B \\ C & D \end{pmatrix} \right\},
\]
where $A$ is $n \times n$, $B$ is $n \times m$, $C$ is $m \times n$ and $D$ is $m \times m$.

We put a $\bZ$-grading on $\fgl(n|m)$ by
\begin{align*}
\fgl(n|m)_{-1} &= \left\{ \begin{pmatrix} 0 & 0 \\ C & 0 \end{pmatrix} \right\}, \qquad  
\fgl(n|m)_0 = \left\{ \begin{pmatrix} A & 0 \\ 0 & D \end{pmatrix} \right\}, \qquad
\fgl(n|m)_{1} = \left\{ \begin{pmatrix} 0 & B \\ 0 & 0 \end{pmatrix} \right\}
\end{align*}
and for homogeneous elements $X,Y \in \fgl(n|m)$ of homogeneous degrees $\deg(X), \deg(Y)$, we define a bracket 
\[
[X,Y] = XY - (-1)^{\deg(X) \deg(Y)} YX.
\]
Then $\fgl(n|m)$ is a Lie superalgebra via the bracket $[,]$.

We can also define $\fgl(n|m)$ in a basis-free way. Let $V = E \oplus F$ be a $\bZ/2$-graded vector space of dimension $(n|m)$. Then $\fgl(n|m)$ is identified with the space of endomorphisms of $V$ with the natural $\bZ$-grading, and we can write
\[
\fgl(V)_{-1} = \hom(F,E), \qquad \fgl(V)_0 = \fgl(E) \times \fgl(F), \qquad \fgl(V)_1 = \hom(E,F).
\]
The Lie bracket is $\GL(E) \times \GL(F)$-equivariant.

\subsection{Periplectic superalgebras.} \label{sec:periplectic}

We define the periplectic Lie superalgebra as the subalgebra of $\fgl(n|n)$ of matrices of the form
\begin{align*}
\fpe(n) = \left\{ \begin{pmatrix} A & B \\ C & -A^T \end{pmatrix} \ \middle|\  B=-B^T,\ C=C^T\right\}.
\end{align*}
It is straightforward to check that $\fpe(n)$ is closed under the Lie bracket $[,]$ and that $\fpe(n)$ inherits the $\bZ$-grading from $\fgl(n|n)$.

We can also define $\fpe(n)$ as the subalgebra of $\fgl(n|n)$ which preserves the odd skew-symmetric bilinear form represented by the matrix $\omega = \begin{pmatrix} 0 & I_n \\ -I_n & 0 \end{pmatrix}$. Let us pause to say what exactly this means since one has to be careful with signs. We define the supertranspose on $\fgl(n|m)$ by 
\[
\begin{pmatrix} A & B \\ C & D \end{pmatrix}^{ST} = \begin{pmatrix} A^T & -C^T \\ B^T & D^T \end{pmatrix}.
\]
Then a homogeneous element $X \in \fgl(n|n)$ preserves $\omega$ if 
\[
X^{ST}\omega + (-1)^{\deg(X)} \omega X = 0.
\]
This definition is an odd analogue of the definition of the symplectic Lie algebra.

We can also define $\fpe(n)$ in a basis-free way. First write $\fgl(n|n)$ as $\fgl(E \oplus F)$ from the previous section. The bilinear form $\omega$ defines an isomorphism $E \cong F^*$, and the $\bZ$-grading of $\fpe(V)$ becomes
\begin{align*}
\fpe(V)_{-1} = \rS^2 E, \qquad \fpe(V)_{0} = \fgl(E), \qquad \fpe(V)_{1} = \bigwedge^2 E^*.
\end{align*}
The Lie bracket is $\GL(E)$-equivariant. 

\section{$\bZ$-graded representations and 2-sided complexes.} \label{sec:Zgraded}

Although everything in the language of superalgebras is only $\bZ/2$-graded, we have seen that for the two algebras of interest in this paper, $\fgl(V)$ and $\fpe(V)$, the $\bZ/2$-grading can be lifted to a $\bZ$-grading. Here we will develop some notation for thinking about those representations which carry a compatible $\bZ$-grading.

We will reinterpret the $\bZ$-grading on a representation as a certain pair of complexes. Before beginning, let us elaborate on why using complexes is  relevant for studying $\bZ$-graded representations. For both of our Lie superalgebras $\fg$, the bracket of any two elements in $\fg_1$ is 0 (similarly for two elements in $\fg_{-1}$). Explicitly, this means that they anticommute with one another in any representation. On the level of the universal enveloping algebra, this gives one an action of an exterior algebra, and this is the object which acts on complexes (via the differential).

The idea behind this section comes from \cite{jpw}.

\subsection{General linear case.} \label{sec:gl_2sided}

The vector representation of $\fgl(V)$ is $V = E \oplus F$ with the action of matrix multiplication. We can view $V$ as a 2-term complex in the following way. First, we have maps
\begin{align*}
F \otimes \hom(F,E) &\to E\\
E \otimes \hom(E,F) &\to F
\end{align*}
given by evaluation, and this coincides with the action of $\fgl(V)_{-1}$ and $\fgl(V)_1$ on $V$.

Now let $A = \rS^\bullet(\hom(F,E)^*)$ be the (graded) coordinate ring of the vector space $\hom(F,E)$. By adjunction, we can rewrite the evaluation map $F \otimes \hom(F,E) \to E$ as $F \to E \otimes \hom(F,E)^*$. We can extend this map $A$-linearly to get 
\[
\Phi \colon F \otimes A(-1) \to E \otimes A,
\]
If we choose bases for $E$ and $F$, then $\Phi$ can be represented by a matrix of linear forms on $\hom(F,E)$, and to recover the action of $X \in \hom(F,E)$, we simply evaluate $\Phi(X)$. Of course, along with the map $\Phi$, we also have the map
\[
\Phi' \colon E \otimes A'(-1) \to F \otimes A'
\]
where $A' = \rS^\bullet(\hom(E,F)^*)$, and the maps $\Phi$ and $\Phi'$ satisfy certain compatibility relations coming from the fact that they come from the action of the Lie superalgebra $\fgl(V)$. We encode this structure in the next definition.

\begin{definition}
A {\bf 2-sided $\fgl(V)$-complex} is a sequence of $\fgl(E) \times \fgl(F)$-modules $(\bF_i)_i$ with equivariant maps $\Phi_i \colon \bF_i \otimes A(-1) \to \bF_{i-1} \otimes A$ and $\Phi'_{i-1} \colon \bF_{i-1} \otimes A'(-1) \to \bF_i \otimes A'$ such that for all $X \in \hom(F,E)$ and $Y \in \hom(E,F)$, 
\begin{compactenum}
\item $\Phi_{i+1}(X) \Phi'_i(Y) - \Phi'_{i-1}(Y) \Phi_i(X) \colon \bF_i \to \bF_i$ coincides with the action of $[X,Y] \in \fgl(E) \times \fgl(F)$, 
\item $\Phi_{i+1}(X) \Phi_i(X) = 0$, and
\item $\Phi'_i(Y) \Phi'_{i-1}(Y) = 0$. \qedhere
\end{compactenum}
\end{definition}

Condition 2 implies that for all $X, X' \in \hom(F,E)$, we have $\Phi_{i+1}(X) \Phi_i(X') = -\Phi_{i+1}(X') \Phi_i(X)$ (apply it to $X+X'$). Similar remarks for condition 3 hold also.

We put $\bF_0 = E$ and $\bF_1 = F$ for the vector representation.

There is an obvious notion of morphisms between 2-sided $\fgl(V)$-complexes and the tensor product of two 2-sided $\fgl(V)$-complexes. The following is immediate.

\begin{proposition}
The tensor category of $2$-sided $\fgl(V)$-complexes is equivalent to the tensor category of $\bZ$-graded representations of $\fgl(V)$.
\end{proposition}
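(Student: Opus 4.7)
The plan is to construct explicit inverse functors between the two tensor categories and then observe that they respect tensor products.

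In one direction, given a 2-sided $\fgl(V)$-complex $(\bF_i,\Phi_i,\Phi'_i)$, I would form the $\bZ$-graded vector space $\bF = \bigoplus_i \bF_i$ and put a $\fgl(V)$-action on it as follows. The $\fgl(V)_0 = \fgl(E)\times\fgl(F)$ action is the given action on each $\bF_i$. For $X \in \fgl(V)_{-1} = \hom(F,E)$, the $A$-linear map $\Phi_i$ is determined by its value on $\bF_i \otimes 1$, which by construction is a linear map $\bF_i \to \bF_{i-1} \otimes \hom(F,E)^*$; adjoining back, each $X$ gives a linear map $\Phi_i(X) \colon \bF_i \to \bF_{i-1}$, and the total sum acts as an element of degree $-1$ on $\bF$. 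Similarly, $\Phi'_i$ gives the action of $\fgl(V)_1 = \hom(E,F)$. Equivariance of $\Phi_i,\Phi'_i$ encodes the brackets $[\fgl(V)_0,\fgl(V)_{\pm 1}]$, condition (1) is literally the bracket $[\fgl(V)_1,\fgl(V)_{-1}] \subset \fgl(V)_0$, and conditions (2), (3) express the vanishing of the brackets $[\fgl(V)_{-1},\fgl(V)_{-1}]$ and $[\fgl(V)_1,\fgl(V)_1]$ respectively. Indeed, an $A$-linear map whose matrix entries are linear forms composed with another such map has matrix entries in $\rS^2\hom(F,E)^*$, and its vanishing is precisely the condition that $\{\Phi(X),\Phi(X')\} = 0$ for all $X,X' \in \hom(F,E)$, which is exactly the anticommutativity of the odd action.

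In the other direction, given a $\bZ$-graded representation $\bF = \bigoplus_i \bF_i$ of $\fgl(V)$, the degree-zero piece $\fgl(V)_0$ acts on each $\bF_i$ making $\bF_i$ a $\fgl(E)\times \fgl(F)$-module. The degree $-1$ and $+1$ pieces give bilinear maps $\hom(F,E)\otimes \bF_i \to \bF_{i-1}$ and $\hom(E,F) \otimes \bF_{i-1} \to \bF_i$ which, by the $\fgl(V)_0$-equivariance of the superbracket, are $\fgl(E)\times\fgl(F)$-equivariant; adjunction and $A$-linear (resp. $A'$-linear) extension yields the required $\Phi_i$ and $\Phi'_i$. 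The three conditions are then satisfied for exactly the reasons sketched above, read in reverse. The two constructions are manifestly inverse to one another on objects, and for morphisms a morphism of 2-sided complexes is by definition a family of $\fgl(E)\times\fgl(F)$-equivariant maps $\bF_i \to \bG_i$ commuting with $\Phi_i,\Phi'_i$, which is exactly a $\bZ$-graded $\fgl(V)$-equivariant map.

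Finally, I would verify that the two constructions respect tensor products. On $\bZ$-graded $\fgl(V)$-modules, $\bF\otimes \bG$ carries the action $X\cdot(f\otimes g) = (Xf)\otimes g + (-1)^{\deg(X)\deg(f)} f\otimes (Xg)$. On 2-sided complexes, the tensor product should be defined on the total grading $(\bF\otimes\bG)_k = \bigoplus_{i+j=k}\bF_i \otimes \bG_j$ with differentials given by the usual signed Leibniz rule applied to $\Phi,\Phi'$; once this is written down, compatibility with the functor is essentially tautological.

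The main obstacle is primarily bookkeeping: keeping the degree conventions and the super-sign conventions consistent so that conditions (2) and (3) match the vanishing of the superbracket on $\fgl(V)_{\pm 1}$, and so that the tensor product definition on 2-sided complexes produces exactly the super-Leibniz action. No deeper argument seems required, since both structures are encoding the same linear data (equivariant maps plus quadratic relations) in slightly different language.
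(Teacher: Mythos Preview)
Your proof sketch is correct and is precisely the unpacking of the definitions that the paper has in mind; the paper itself offers no argument beyond the sentence ``The following is immediate,'' and your explicit construction of inverse functors together with the identification of conditions (1)--(3) with the super-bracket relations is exactly what makes it immediate.
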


The advantage of this viewpoint is that we will be able to compare certain tensor constructions on $\Phi$ with the linear strands of certain free resolutions over the polynomial ring $A$. 

\subsection{Periplectic case.} \label{sec:peri_2sided}

The same kinds of definitions can be made for $\fpe(V)$. The vector representation of $\fpe(V)$ is $V = E \oplus E^*$, again with the action of matrix multiplication. As before, we have evaluation maps
\begin{align*}
E^* \otimes \rS^2 E &\to E,\\
E \otimes \bigwedge^2 E^* &\to E^*,
\end{align*}
and this coincides with the action of $\fpe(V)_{-1}$ and $\fpe(V)_1$ on $E \oplus E^*$. Let $B = \rS^\bullet(\rS^2 E^*)$ and $B' = \rS^\bullet(\bigwedge^2 E)$. As before, we get maps
\begin{align*}
\Phi \colon E^* \otimes B(-1) &\to E \otimes B,\\
\Phi' \colon E \otimes B'(-1) &\to E^* \otimes B'.
\end{align*}

\begin{definition}
A {\bf 2-sided $\fpe(V)$-complex} is a sequence of $\fgl(E)$-modules $(\bF_i)_i$ with equivariant maps $\Phi_i \colon \bF_i \otimes B(-1) \to \bF_{i-1} \otimes B$ and $\Phi'_{i-1} \colon \bF_{i-1} \otimes B'(-1) \to \bF_i \otimes B'$ such that for all $X \in \rS^2 E$ and $Y \in \bigwedge^2 E^*$, 
\begin{compactenum}
\item $\Phi_{i+1}(X) \Phi'_i(Y) - \Phi'_{i-1}(Y) \Phi_i(X) \colon \bF_i \to \bF_i$ coincides with the action of $[X,Y] \in \fgl(E)$, 
\item $\Phi_{i+1}(X) \Phi_i(X) = 0$, and
\item $\Phi'_i(Y) \Phi'_{i-1}(Y) = 0$. \qedhere
\end{compactenum}
\end{definition}

We put $\bF_0 = E$ and $\bF_1 = E^*$ for the vector representation.

As before, the following is immediate from the definitions.

\begin{proposition}
The tensor category of $2$-sided $\fpe(V)$-complexes is equivalent to the tensor category of $\bZ$-graded representations of $\fpe(V)$.
\end{proposition}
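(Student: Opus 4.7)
The plan is to exhibit mutually inverse functors $G$ and $H$ between the two tensor categories and then verify that the coherence data (associators, unit, braiding) on both sides match in the obvious way. Fix a $\bZ$-grading convention where $\fpe(V)_0=\fgl(E)$ is the degree-$0$ part, $\rS^2 E=\fpe(V)_{-1}$ lowers degree, and $\bigwedge^2 E^*=\fpe(V)_1$ raises degree; then a $\bZ$-graded representation decomposes as $\bF=\bigoplus_i \bF_i$ with each $\bF_i$ an $\fgl(E)$-module.

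First I define $G$ from graded representations to 2-sided complexes. Given $\bF$, the evaluation of the $\fpe(V)_{-1}$-action provides an $\fgl(E)$-equivariant map $\bF_i\otimes \rS^2 E \to \bF_{i-1}$, which by adjunction and $B$-linear extension gives a map $\Phi_i\colon \bF_i\otimes B(-1)\to \bF_{i-1}\otimes B$; symmetrically $\bigwedge^2 E^*$ yields $\Phi'_{i-1}\colon \bF_{i-1}\otimes B'(-1)\to \bF_i\otimes B'$. I check conditions (1)-(3) one by one: condition (1) is the super-bracket identity $[\rho(X),\rho(Y)]_{\mathrm{super}}=\rho([X,Y])$ for $X\in\fpe(V)_{-1}$, $Y\in\fpe(V)_1$ restricted to the degree-$i$ piece; condition (2) follows from $[X,X]_{\mathrm{super}}=2\rho(X)^2=0$ for odd $X\in\fpe(V)_{-1}$ applied successively on $\bF_{i+1}\to\bF_i\to\bF_{i-1}$; condition (3) is the analogous vanishing for two odd $Y$'s in $\fpe(V)_1$.

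Next I define $H$ from 2-sided complexes to graded representations. Set $\bF:=\bigoplus_i\bF_i$ (graded), let $\fpe(V)_0=\fgl(E)$ act via the given $\fgl(E)$-structure on each $\bF_i$, let $X\in\rS^2 E$ act as $\bigoplus_i \Phi_i(X)$, and let $Y\in\bigwedge^2 E^*$ act as $\bigoplus_i\Phi'_i(Y)$. To verify this yields a Lie superalgebra representation I use that $\fpe(V)$ is generated as a Lie superalgebra by $\fpe(V)_{-1}\oplus\fpe(V)_0\oplus\fpe(V)_1$, so only the brackets among these three pieces need to be checked: the $\fgl(E)$-equivariance of $\Phi_i,\Phi'_{i-1}$ handles $[\fpe(V)_0,\fpe(V)_{\pm 1}]$, and conditions (1), (2), (3) handle exactly $[\fpe(V)_{-1},\fpe(V)_1]$, $[\fpe(V)_{-1},\fpe(V)_{-1}]$, and $[\fpe(V)_1,\fpe(V)_1]$, respectively (conditions (2) and (3) polarized give the required anticommutativity identities, as noted in the remark after the definition).

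The compositions $G\circ H$ and $H\circ G$ are manifestly the identity on objects and morphisms by construction: $G$ merely repackages the action of the graded pieces $\fpe(V)_{\pm 1}$ as $B$- and $B'$-linear maps, while $H$ evaluates those maps at elements of $\rS^2 E$ and $\bigwedge^2 E^*$. Morphisms correspond by the same token. Finally, tensor compatibility reduces to checking that the tensor product of 2-sided complexes (defined componentwise, with the $\Phi,\Phi'$ on a tensor product given by the usual super-Leibniz rule with appropriate Koszul signs) matches the tensor product of $\bZ$-graded $\fpe(V)$-representations (in which $\fpe(V)_{\pm 1}$ act by the super-Leibniz rule); this is a direct sign-check. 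The only real place where care is needed is the sign in condition (1) and in the super-Leibniz rule on tensor products, which is the reason the category of 2-sided complexes is defined the way it is; once the signs are matched, everything else is bookkeeping.
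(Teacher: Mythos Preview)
Your proposal is correct and is exactly the unpacking of what the paper means when it says the proposition is ``immediate from the definitions'': the paper gives no proof beyond that remark, and your construction of the mutually inverse functors $G$ and $H$ is precisely the intended tautology. One small caution: the sign in condition~(1) of the definition (written with a minus) is at odds with the super-anticommutator $\Phi(X)\Phi'(Y)+\Phi'(Y)\Phi(X)$ that appears later in the proof of Theorem~\ref{thm:periplecticaction}, so when you match condition~(1) with the super-bracket identity you should note that this is a typographical inconsistency in the paper rather than a substantive obstacle.
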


When the context about which algebra we are discussing is clear, we will simply use the phrase ``2-sided complex''.

\section{(Skew-)symmetric minors.} \label{sec:skewsymminors}

We continue to use the notation $B = \rS^\bullet(\rS^2 E^*)$ and $B' = \rS^\bullet(\bigwedge^2 E)$ from \S\ref{sec:peri_2sided}.

\subsection{JPW complexes.} \label{sec:per_jpw}

Choose nonnegative integers $s$ and $r$ so that $\dim E > s+r$. Given a partition $\alpha$ with $\ell(\alpha) \le s$, define the partition
\begin{align} \label{eqn:Prs}
P_{r,s}(\alpha) = (s + \alpha_1, \dots, s + \alpha_s, s^r, \alpha^\dagger_1,
\dots, \alpha^\dagger_{\alpha_1}),
\end{align}
which we visualize as follows:
\[
\begin{tikzpicture}[scale=.4]
\draw (0,-2) -- (0,7) -- (8,7) -- (8,6) -- (6,6) -- (6,5) -- (5,5) -- (5,4) -- (3,4) -- (3,1) -- (2,1) -- (2,0) -- (1,0) -- (1,-2) -- (0,-2) -- cycle;
\draw (0,3) -- (3,3);
\draw (0,4) -- (3,4) -- (3,7);
\path (1.5,5.5) node {$s \times s$};
\path (1.5,3.5) node {$r \times s$};
\path (4.5,5.5) node {$\alpha$};
\path (1.5,1.5) node {$\alpha^\dagger$};
\end{tikzpicture}
\]

Set
\begin{align} \label{eqn:jpwdefn}
\JPW^{r,s}_i = \bigoplus_{|\alpha| = i} \bS_{P_{r,s}(\alpha)} E^*
\end{align}
which naturally carries an action of $\fgl(E)$. Up to a homological grading shift, the sequences $\JPW^{r,s}_\bullet \otimes B$ can be realized as the linear strands of certain minimal free resolutions over the polynomial ring $B$. More specifically, when $s$ is even, they appear in the minimal free resolution of the ideal of $(r+2) \times (r+2)$ minors of the generic symmetric matrix $\rS^2 E$ \cite[Theorem 6.3.1(c)]{weyman}. When $s$ is odd and $r$ is odd, $\JPW^{r,s}_\bullet \otimes B$ is a linear strand in the minimal free resolution of the module $M_2$ mentioned on \cite[p. 180]{weyman}. The case of $s$ odd and $r$ even is not mentioned in \cite{weyman}, but in this case, $\JPW^{r,s}_\bullet \otimes B$ can be realized as the linear strand in the minimal free resolution of the module obtained from $M_2$ via the localization trick in \cite[\S 6.3, part 3]{weyman}. The construction of these complexes first appears in \cite{jpw}.

\begin{remark} \label{rmk:invttheory}
We pause to point out the significance of the module $M_2$ mentioned above. Consider a vector space $W$ equipped with a (split) orthogonal form, and let $\SO(W)$ and $\bO(W)$ be the special orthogonal and general orthogonal groups. The ring of $\bO(W)$-invariant polynomials on $W^{\oplus N}$ is naturally isomorphic to the coordinate ring $R$ of the determinantal variety of symmetric $N \times N$ matrices with rank at most $\dim W$ \cite[Theorem 10.4.0.3]{smt}. The ring of $\SO(W)$-invariant polynomials is a degree 2 extension of $R$, and as an $R$-module, it is $R \oplus M_2$. This direct sum decomposition is the eigenspace decomposition of the action of $\bZ/2 \cong \bO(V) / \SO(V)$.
\end{remark}

As a consequence of the above discussion, we get $B$-linear maps
\[
\Phi_i \colon \JPW^{r,s}_i \otimes B(-1) \to \JPW^{r,s}_{i-1} \otimes B.
\]
The main result in this section is that $\Phi$ can be completed to a 2-sided complex $(\Phi, \Phi')$.

\begin{theorem} \label{thm:periplecticaction}
There exist $B'$-linear maps
\begin{align*}
\Phi'_i \colon \JPW_i^{r,s} \otimes B'(-1) \to \JPW_{i+1}^{r,s} \otimes B'
\end{align*}
so that $(\Phi, \Phi')$ is a $2$-sided $\fpe(V)$-complex. In particular, $\JPW_\bullet^{r,s}$ affords an action of $\fpe(V)$. Moreover, $\JPW_\bullet^{r,s}$ is an irreducible $\fpe(V)$-representation.
\end{theorem}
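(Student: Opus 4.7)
The overall strategy is a ``super'' analogue of Weyl's construction of irreducible classical-group representations, recast in the 2-sided complex language of Section~\ref{sec:Zgraded}. The vector representation $V = E \oplus E^*$ of $\fpe(V)$, realized as a 2-sided complex in Section~\ref{sec:peri_2sided}, is the building block: applying the Schur complex functor $\bS_\lambda(-)$ to it (for a $\lambda$ to be determined) produces a 2-sided $\fpe(V)$-complex whose $\fgl(E)$-structure decomposes via the super Cauchy identity $\bS_\lambda(E \oplus E^*) \cong \bigoplus_\mu \bS_\mu E \otimes \bS_{(\lambda/\mu)^\dagger} E^*$. The shapes $P_{r,s}(\alpha)$, which are ``thick hooks'' built from an $(s+r)\times s$ rectangle with $\alpha$ glued above-right and $\alpha^\dagger$ glued below-left, should arise combinatorially as the transposes of the appropriate skew shapes $\lambda/\mu$.

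The first main step is to present $\JPW^{r,s}_\bullet$ as the cokernel (or kernel, in the dual variant) of a contraction map $\bS_{\lambda'}(V) \to \bS_\lambda(V)$ of 2-sided $\fpe(V)$-complexes, where the map is built from the $\fpe(V)$-invariant odd form $\omega$. This mimics the classical Weyl construction for $\Sp(V)$, in which the symplectic form is contracted against a tensor power. The cokernel inherits a 2-sided complex structure, which supplies the sought-after maps $\Phi'_i$, and the three axioms for a 2-sided $\fpe(V)$-complex transfer automatically from $\bS_\lambda(V)$. A $\fgl(E)$-isotypic calculation then identifies the terms of this cokernel with $\bigoplus_\alpha \bS_{P_{r,s}(\alpha)} E^*$, and since the linear strand of the minimal free resolution from \cite[Ch.~6]{weyman} is uniquely determined up to scalar on each isotypic component, the resulting $\Phi$ will agree with the known differential.

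For irreducibility, the decomposition $\JPW^{r,s}_i = \bigoplus_{|\alpha|=i} \bS_{P_{r,s}(\alpha)} E^*$ is multiplicity-free as an $\fgl(E)$-representation (since $\alpha \mapsto P_{r,s}(\alpha)$ is injective) and each summand is irreducible, so any $\bZ$-graded $\fpe(V)$-submodule is a direct sum of certain $\bS_{P_{r,s}(\alpha)} E^*$'s. It then suffices to show via Pieri-type expansions that both $\Phi$ and $\Phi'$ are nonzero on every such summand: descent via $\Phi$ forces any nonzero submodule to reach $\JPW^{r,s}_0 = \bS_{s^{s+r}} E^*$, and ascent via $\Phi'$ from there recovers all higher levels. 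The main obstacle will be the first step: pinning down the correct $\lambda$ and contraction so that, after taking cokernel, the ``mixed'' summands $\bS_\mu E \otimes \bS_\nu E^*$ with $\mu \ne \emptyset$ all vanish while the $\bS_{P_{r,s}(\alpha)} E^*$ survive. Since $\fpe(V)$-representation theory is not semisimple, one cannot shortcut this computation via complete reducibility; it must be carried out directly in the Schur-complex model.
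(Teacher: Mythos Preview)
Your overall plan---build a 2-sided $\fpe(V)$-complex from a Schur complex $\bS_\lambda V$ with $\lambda=(s^{\dim E-s-r})$, cut it down using the invariant form, and then match it with $\JPW^{r,s}_\bullet$---is exactly the route the paper takes, and your irreducibility argument is essentially Lemma~\ref{lem:surj}. But there is a genuine gap in your ``first main step.''

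You propose to show by a $\fgl(E)$-isotypic computation that the cokernel of a single contraction map has terms precisely $\bigoplus_\alpha \bS_{P_{r,s}(\alpha)}E^*$. The paper does \emph{not} do this, and in fact states the analogous claim as an open conjecture (the map $f$ of \eqref{eqn:fmap} is only conjectured to be an isomorphism). Two points:
\begin{itemize}
\item The paper uses \emph{both} operations at once: it forms $\bS_{[\lambda]}V = k_\lambda(V)/(k_\lambda(V)\cap i_\lambda(V))$, taking the kernel of the evaluation map \emph{and} quotienting by the image of the trace map. A single cokernel will not kill enough summands; already in degree~1 one needs both to reduce $(\tilde\bS_\lambda V)_1$ down to the single piece $\bS_{(s+1,s^{s+r-1},1)}E^*$.
\item Even after forming $\bS_{[\lambda]}V$, the paper does not attempt to compute its $\fgl(E)$-decomposition directly. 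Instead it exploits a feature of $\uJPW^{r,s}_\bullet$ that you do not use: it is \emph{linearly exact} over $B$, because it is a linear strand of a known minimal free resolution. This allows an inductive lifting argument: one checks that $\rH_0(\tilde\bS_{[\lambda]}V)$ surjects onto $\rH_0(\uJPW^{r,s}_\bullet)$, and then lifts degree by degree, at each stage using linear exactness to produce $f_{i+1}$ from $f_i$ and using the bracket relation $[\Phi(X),\Phi'(Y)]\in\fgl(E)$ to verify that $\ker f$ is $\fpe(V)$-stable. Lemmas~\ref{lem:symmetricshapes} and~\ref{lem:surj} then guarantee surjectivity at each stage.
\end{itemize}
So the missing idea is: rather than computing the quotient of $\bS_\lambda V$ directly, \emph{map} it to the already-known complex $\uJPW^{r,s}_\bullet$ and use linear exactness of the target to propagate the map. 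Your proposed isotypic calculation, if it could be carried out, would prove strictly more than the theorem (it would settle the conjecture), but there is no indication of how you would control the many extra summands in $\bS_{\lambda/\nu}E\otimes\bS_{\nu^\dagger}E^*$ beyond degree~1.
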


The proof will be given in \S\ref{sec:peri_exist}.

\begin{example}  \label{eg:s=1}
Before handling the general case, we illustrate the case of $s=1$.

Set $n = \dim E$ and $k = n - 1 - r$. Start with the vector representation $V$, which corresponds to the 2-sided complex 
\begin{align*}
\Phi \colon E^* \otimes B(-1) \to E \otimes B,\\
\Phi' \colon E^* \otimes B' \gets E \otimes B'(-1),
\end{align*}
and consider the representation $W = \bigwedge^k V \otimes \det E^*$, which corresponds to the 2-sided complex
\begin{align*}
\det E^* \otimes \rS^k E^* \otimes B(-k) \to \bigwedge^{n-1} E^* \otimes \rS^{k-1} E^* \otimes B(-k+1) \to \cdots \to \bigwedge^{r+1} E^* \otimes B,\\
\det E^* \otimes \rS^k E^* \otimes B' \gets \bigwedge^{n-1} E^* \otimes \rS^{k-1} E^* \otimes B'(-1) \gets \cdots \gets \bigwedge^{r+1} E^* \otimes B'(-k).
\end{align*}
So $W_i = \bigwedge^{r+1+i} E^* \otimes \rS^i E^*$. From this description, we can find a $\fpe(V)$-subrepresentation of $W$. First note that $W_i$ is an irreducible $\fgl(E)$-module for $i=0$ or $i=k$. Otherwise we have
\[
W_i = \bS_{(i,1^{r+1+i})} E^* \oplus \bS_{(i+1,1^{r+i})} E^*
\]
by Pieri's rule \cite[Corollary 2.3.5]{weyman}. Again by Pieri's rule, we see that $\bS_{(i,1^{r+i+1})} E^*$ is not a direct summand of $\bS_{(i,1^{r+i-1})} E^* \otimes \rS^2 E^*$. Since $\Phi$ and $\Phi'$ are $\fgl(E)$-equivariant, we see that in the map $\Phi \colon W_i \to W_{i-1} \otimes \rS^2 E^*$, the summand $\bS_{(i,1^{r+1+i})} E^*$ can only map to $\bS_{(i-1,1^{r+i})} E^*$. Similarly, in the map $\Phi' \colon W_i \to W_{i+1}$, the summand $\bS_{(i,1^{r+1+i})} E^*$ can only map to $\bS_{(i+1,1^{r+2+i})} E^*$. 

So we get a subrepresentation $W' \subset W$ given by $W'_i = \bS_{(i,1^{r+1+i})} E^*$. We also see that $W / W' = \JPW^{r,1}_\bullet$, so we deduce that $\JPW^{r,1}_\bullet$ can be given the structure of a 2-sided complex.
\end{example}

\begin{remark}
The quotient $W \to \JPW^{r,1}_\bullet \to 0$ from Example~\ref{eg:s=1} can be extended to a long exact sequence
\[
\cdots \to (\bigwedge^{k-4} V)[2] \otimes \det E^* \to (\bigwedge^{k-2} V)[1] \otimes \det E^* \to \bigwedge^k V \otimes \det E^* \to \JPW^{r,1}_\bullet\to 0
\]
where the differentials are induced by the trace map $K[1] \to \bigwedge^2 V$, which is defined in Proposition~\ref{prop:periplectic_trace}.
\end{remark}

\subsection{Trace and evaluation.} \label{sec:periplectic:morphisms}

\begin{proposition} \label{prop:periplectic_trace}
We have nonzero $\fpe(V)$-equivariant maps $K[1] \to \bigwedge^2 V$ and $\rS^2 V \to K[1]$, where $K$ denotes the trivial $1$-dimensional module. We call these maps {\bf trace} and {\bf evaluation}, respectively.
\end{proposition}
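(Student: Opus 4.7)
The plan is to write both maps down explicitly using the decomposition $V = E \oplus E^*$ and then verify equivariance against generators of $\fpe(V)$. Fix a basis $\{e_i\}$ of $E$ with dual basis $\{e^i\}$ of $E^*$. Using the super decompositions
\[
\bigwedge^2 V = \bigwedge^2 E \oplus (E \otimes E^*) \oplus \rS^2 E^*, \qquad \rS^2 V = \rS^2 E \oplus (E \otimes E^*) \oplus \bigwedge^2 E^*,
\]
the only summand in either sitting in $\bZ$-degree $1$ (the degree of $K[1]$) is the common piece $E \otimes E^*$. I would define trace to be $1 \mapsto t := \sum_i e_i \wedge e^i \in E \otimes E^* \subset \bigwedge^2 V$ and evaluation to be the natural pairing $e \otimes \phi \mapsto \phi(e)$ on $E \otimes E^* \subset \rS^2 V$, extended by zero on the remaining two summands. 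Both are manifestly $\fgl(E) = \fpe(V)_0$-equivariant because they encode the canonical vector and covector on $E \otimes E^* \cong \End(E)$.

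Next I would reduce full $\fpe(V)$-equivariance to a check on $\fpe_{\pm 1}$. Since $\fpe(V)$ is spanned by $\fpe_0 \cup \fpe_{-1} \cup \fpe_1$, it suffices to verify equivariance under each. The representation $K[1]$ is concentrated in $\bZ$-degree $1$, so $\fpe_{\pm 1}$ (which shift $\bZ$-degree by $\pm 1$) automatically act by zero on it. The remaining check is that $X \cdot t = 0$ in $\bigwedge^2 V$, and that evaluation of $X \cdot v$ vanishes in $K$, for every $X \in \rS^2 E \cup \bigwedge^2 E^*$ and every $v \in \rS^2 V$.

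These verifications are short super-Leibniz computations. For the trace, $X = e_j e_k \in \rS^2 E$ annihilates $E$ and sends $e^i \mapsto \delta^i_j e_k + \delta^i_k e_j$, so $X \cdot t = \sum_i e_i \wedge (X \cdot e^i) = e_j \wedge e_k + e_k \wedge e_j$, which is zero because on super-even elements $\wedge$ is genuinely antisymmetric. Dually, for $Y = e^a \wedge e^b \in \bigwedge^2 E^*$ one computes $Y \cdot t = e^b \wedge e^a - e^a \wedge e^b$, which vanishes because on super-odd elements $\wedge$ is \emph{symmetric}. For the evaluation map there are two analogous cancellations: $X$ acting on the $\bigwedge^2 E^*$-summand and $Y$ acting on the $\rS^2 E$-summand (all other possibilities land in $\bZ$-degrees $0$ or $2$, where evaluation is zero by construction). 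The main hazard in executing this plan is tracking parity signs in the super-Leibniz rule and in the identifications of super-$\bigwedge^2$ and super-$\rS^2$ with their ordinary counterparts on even/odd pieces; conceptually, $t$ is essentially $\omega^{-1}$ and evaluation encodes $\omega$, so both statements really restate the defining $\fpe(V)$-invariance of the form $\omega$.
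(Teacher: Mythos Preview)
Your proposal is correct. The explicit element $t = \sum_i e_i \wedge e^i$ and the natural pairing are exactly the right maps, and the super-Leibniz checks you sketch go through as written (the two evaluation-side cancellations you only allude to are entirely analogous). Your closing remark is also on point: the evaluation map literally is the form $\omega$, and trace is its inverse, so equivariance is tautological once one unwinds the definition of $\fpe(V)$ as the stabilizer of $\omega$.

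The paper's proof takes a different, less computational route. Rather than computing the action of $\fpe_{\pm 1}$ on $t$ by hand, it uses the 2-sided complex formalism and observes that the differentials $\Phi$ and $\Phi'$ are $\GL(E)$-equivariant. Hence the image of the invariant line $K \subset E \otimes E^*$ under $\Phi$ must land in the $\GL(E)$-invariants of $\bigwedge^2 E \otimes \rS^2 E^*$, which is zero by the Littlewood--Richardson rule; similarly for $\Phi'$. So $K[1]$ is a subcomplex, hence a subrepresentation, with no sign-chasing required. Your approach has the virtue of being completely explicit and self-contained, and it makes the connection to $\omega$ transparent; the paper's approach avoids all coordinate computations and scales more gracefully to the harder maps in the next proposition, where one constructs morphisms $(\bS_{\lambda/(1,1)} V)[1] \to \bS_\lambda V$ and $\bS_\lambda V \to (\bS_{\lambda/(2)} V)[1]$ and an explicit basis computation would be unpleasant.
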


These maps also appeared in \cite[\S 4]{jpw}.

\begin{proof}
In terms of 2-sided complexes, we can write $\bigwedge^2 V$ as
\begin{align*}
\rS^2 E^* \otimes B(-2) \to E \otimes E^* \otimes B(-1) \to \bigwedge^2 E \otimes B,\\
\rS^2 E^* \otimes B' \gets E \otimes E^* \otimes B'(-1) \gets \bigwedge^2 E \otimes B'(-2).
\end{align*}
There is an invariant $K \subset E \otimes E^*$. Since the complexes above are $\GL(E)$-equivariant, we see that $K$ maps to 0 in $\bigwedge^2 E \otimes B$ in the first complex since $B = \rS^\bullet(\rS^2 E^*)$. Similarly, $K$ maps to 0 in $\rS^2 E^* \otimes B'$. So $K[1]$ is a subcomplex of both of the above complexes, and this shows the existence of the $\fpe(V)$-equivariant map $K[1] \to \bigwedge^2 V$. 

The existence of the map $\rS^2 V \to K[1]$ is similar and is obtained by showing that $K[1]$ is a quotient complex of the corresponding 2-sided complexes.
\end{proof}

We can use the trace and evaluation maps to define a larger class of nonzero $\fpe(V)$-equivariant morphisms, which we will need later.

\begin{proposition} \label{prop:pemaps}
Let $\lambda$ be a partition. There are $\fpe(V)$-equivariant morphisms
\[
(\bS_{\lambda / (1,1)} V)[1] \to \bS_\lambda V, \qquad \bS_\lambda V \to (\bS_{\lambda / (2)} V)[1]
\]
which are nonzero in homological degree $0$.
\end{proposition}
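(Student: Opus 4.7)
The plan is to construct both morphisms as composites of the canonical Pieri multiplication/comultiplication with the trace and evaluation maps of Proposition~\ref{prop:periplectic_trace}. Since we work in characteristic zero, for any $V$ (and in particular for $V$ viewed as a $2$-sided $\fpe(V)$-complex) there are canonical natural $\GL(V)$-equivariant maps
\begin{align*}
m &\colon \bigwedge^{2} V \otimes \bS_{\lambda/(1,1)} V \to \bS_\lambda V,\\
\Delta &\colon \bS_\lambda V \to \rS^{2} V \otimes \bS_{\lambda/(2)} V,
\end{align*}
realizing the Young-symmetrizer copy of $\bS_\lambda$ inside the appropriate tensor product (cf.\ \cite[\S V]{abw} for the Schur-complex version). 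The desired morphisms are then the composites
\begin{align*}
(\bS_{\lambda/(1,1)} V)[1] = K[1] \otimes \bS_{\lambda/(1,1)} V &\xrightarrow{t \otimes \mathrm{id}} \bigwedge^{2} V \otimes \bS_{\lambda/(1,1)} V \xrightarrow{m} \bS_\lambda V,\\
\bS_\lambda V &\xrightarrow{\Delta} \rS^{2} V \otimes \bS_{\lambda/(2)} V \xrightarrow{e \otimes \mathrm{id}} (\bS_{\lambda/(2)} V)[1].
\end{align*}

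Equivariance is automatic: $t$ and $e$ are $\fpe(V)$-equivariant by Proposition~\ref{prop:periplectic_trace}, while $m$ and $\Delta$ are $\fgl(V)$-equivariant by naturality, hence $\fpe(V)$-equivariant since $\fpe(V) \subset \fgl(V)$, and tensor products and compositions of equivariant morphisms are equivariant.

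To check nonvanishing in homological degree $0$, I would track the image of the classical (all-$E$) summand through the composite. The trace sends $1 \in K$ to the identity element $\sum_i e_i \otimes e_i^* \in E \otimes E^* \subset \bigwedge^{2} V$; composing with $m$ and projecting onto the $\bigoplus_\mu \bS_\mu E \otimes E^*$ piece of $\bS_\lambda V$ (over $\mu$ with $|\lambda/\mu| = 1$), the $E^*$ factor splits off and what remains is a Pieri-type multiplication $E \otimes \bS_{\lambda/(1,1)} E \to \bigoplus_\mu \bS_\mu E$, a nonzero natural transformation of classical Schur functors whenever $\bS_{\lambda/(1,1)} E \neq 0$; this nonvanishing can be verified by evaluating on a highest-weight vector of a chosen irreducible summand $\bS_\rho E \subset \bS_{\lambda/(1,1)} E$. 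The dual argument handles the evaluation case.

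The main technical obstacle will be the bookkeeping between the shift convention $[1]$ and the internal grading of the Schur complexes: one must confirm that the classical nonzero Pieri contribution identified above sits in the graded component labeled ``homological degree $0$'' by the proposition, accounting for the fact that the trace lands in the odd summand $E \otimes E^* \subset \bigwedge^{2} V$ rather than in the even summand $\bigwedge^{2} E$.
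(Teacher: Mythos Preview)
Your approach is essentially the paper's: both composites are trace/evaluation followed by a Pieri-type multiplication/comultiplication on Schur complexes, and equivariance is argued exactly as you do. The difference is one of emphasis. You take the maps $m$ and $\Delta$ as ``canonical'' and cite \cite{abw}; the paper's proof is devoted almost entirely to \emph{constructing} precisely these maps and checking that they are well-defined. Concretely, the paper lifts $m$ to the presentation level as
\[
\bigwedge^{\lambda^\dagger_1 - 2} V \otimes K[1] \xrightarrow{\ \mathrm{trace}\ } \bigwedge^{\lambda^\dagger_1 - 2} V \otimes \bigwedge^2 V \xrightarrow{\ \mathrm{mult}\ } \bigwedge^{\lambda^\dagger_1} V,
\]
tensored with the identity on the remaining columns, and then verifies (via a nine-term commutative diagram using the bialgebra compatibility of multiplication and comultiplication on $\bigwedge^\bullet V$) that this descends through the straightening relations defining $\bS_{\lambda/(1,1)} V$ and $\bS_\lambda V$. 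The dual argument for $\Delta$ uses the $\rS^\lambda V$ copresentation and the divided-power bialgebra. So what you package as a citation is exactly the technical content the paper supplies; if your reader is happy to accept the existence of those Pieri maps for Schur complexes, your argument is complete, but be aware that \cite[\S V]{abw} does not literally state the skew-to-straight map you need, so you may want to either spell out the descent or point to a more precise reference.

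On nonvanishing: your instinct to track the all-$E$ piece through the composite is correct, and your closing caveat about the shift bookkeeping is well-placed---the image of trace lands in the degree-$1$ summand $E\otimes E^*\subset \bigwedge^2 V$, so the ``classical'' contribution you identify does not sit in $(\bS_\lambda V)_0=\bS_\lambda E$ but one degree higher, and you should adjust the argument accordingly. The paper does not belabor this point either; once the map is built, nonvanishing follows because the lifted map on $\bigwedge^\lambda V$ is visibly nonzero and the projection to $\bS_\lambda V$ is surjective.
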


\begin{proof}
By \cite[\S 2.4]{weyman}, we can define $\bS_{\alpha / \beta} V$ as a quotient of 
\[
\bigwedge^{\alpha / \beta} V := \bigwedge^{\alpha^\dagger_1 - \beta^\dagger_1} V \otimes \cdots \otimes \bigwedge^{\alpha^\dagger_{\alpha_1} - \beta^\dagger_{\alpha_1}} V.
\]
Consider the diagram
\[
\xymatrix{ 
(\bigwedge^{\lambda / (1,1)} V)[1] \ar[r] \ar[d] & \bigwedge^\lambda V \ar[d] \\
(\bS_{\lambda / (1,1)} V)[1] & \bS_\lambda V
}
\]
where the horizontal map is 
\[
\bigwedge^{\lambda^\dagger_1 - 2} V \otimes K[1] \xrightarrow{{\rm trace}} 
\bigwedge^{\lambda^\dagger_1 - 2} V \otimes \bigwedge^2 V \xrightarrow{m} 
\bigwedge^{\lambda^\dagger_1} V 
\]
($m$ is the multiplication map) tensored with the identity on $\bigwedge^{\lambda^\circ}$ where $\lambda^\circ$ is the diagram of $\lambda$ with the first column removed. We claim that this horizontal map descends to a map which completes the diagram. By \cite[\S 2.4]{weyman}, the kernels of the vertical maps (for general $\alpha / \beta$) are spanned by the subspaces 
\[
\bigwedge^{\alpha^\dagger_1 - \beta^\dagger_1} V \otimes \cdots \otimes R_{a,a+1} V \otimes \cdots \otimes \bigwedge^{\alpha^\dagger_{\alpha_1} - \beta^\dagger_{\alpha_1}} V \qquad (1 \le a \le \alpha_1-1)
\] 
where $R_{a,a+1}V \subset \bigwedge^{\alpha^\dagger_a - \beta^\dagger_a} V \otimes \bigwedge^{\alpha^\dagger_{a+1} - \beta^\dagger_{a+1}} V$ is defined as the span of the images of the maps, for all $u+v < \alpha^\dagger_{a+1} - \beta^\dagger_a$, 
\[
\xymatrix{
\DS \bigwedge^u V \otimes \bigwedge^{\alpha^\dagger_a - \beta^\dagger_a - u + \alpha^\dagger_{a+1} - \beta^\dagger_{a+1} - v} V \otimes \bigwedge^v V \ar[d]^{1 \otimes \Delta \otimes 1} \\
\DS \bigwedge^u V \otimes \bigwedge^{\alpha^\dagger_a - \beta^\dagger_a - u} V \otimes \bigwedge^{\alpha^\dagger_{a+1} - \beta^\dagger_{a+1} - v} V \otimes \bigwedge^v V \ar[d]^{m_{12} \otimes m_{34}} \\
\DS \bigwedge^{\alpha^\dagger_a - \beta^\dagger_a} V \otimes \bigwedge^{\alpha^\dagger_{a+1} - \beta^\dagger_{a+1}} V.}
\]
Here $\Delta$ is comultiplication, and $m_{ij}$ denotes multiplication on the $i$th and $j$th factor. So we just have to check that each such subspace in $\bigwedge^{\lambda/(1,1)} V$ is mapped to another such subspace in $\bigwedge^\lambda V$. These relations only involve two consecutive columns, and $\nu$ and $\lambda$ have the same columns except for the first one, so we only need to check the claim for $a=1$. In this case, it becomes a matter of verifying that the following diagram commutes ($m_{ij}$ is multiplication on the $i$th and $j$th factors)
\[
\footnotesize
\xymatrix{
\DS \bigwedge^u V \otimes \bigwedge^{\lambda^\dagger_1 - 2 - u + \lambda^\dagger_2 - v} V \otimes \bigwedge^v V \ar[d]^{1 \otimes \Delta \otimes 1}[1] \ar[r]^-{\rm trace} &
\DS \bigwedge^u V \otimes \bigwedge^{\lambda^\dagger_1 - 2 - u + \lambda^\dagger_2 - v} V \otimes \bigwedge^2 V \otimes \bigwedge^v V \ar[d]^{1 \otimes \Delta \otimes 1 \otimes 1} \ar[r]^-{m_{23}} &
\DS \bigwedge^u V \otimes \bigwedge^{\lambda^\dagger_1 - u + \lambda^\dagger_2 - v} V \otimes \bigwedge^v V \ar[d]^{1 \otimes \Delta \otimes 1} \\
\DS \bigwedge^u V \otimes \bigwedge^{\lambda^\dagger_1 - 2 - u} V \otimes \bigwedge^{\lambda^\dagger_2 - v} V \otimes \bigwedge^v V \ar[d]^{m_{12} \otimes m_{34}}[1] \ar[r]^-{\rm trace} &
\DS \bigwedge^u V \otimes \bigwedge^{\lambda^\dagger_1 - 2 - u} V \otimes \bigwedge^{\lambda^\dagger_2 - v} V \otimes \bigwedge^2 V \otimes \bigwedge^v V \ar[d]^{m_{12} \otimes m_{35}} \ar[r]^-{m_{24}} & 
\DS \bigwedge^u V \otimes \bigwedge^{\lambda^\dagger_1 - u} V \otimes \bigwedge^{\lambda^\dagger_2 - v} V \otimes \bigwedge^v V \ar[d]^{m_{12} \otimes m_{34}} \\
\DS \bigwedge^{\lambda^\dagger_1 - 2} V \otimes \bigwedge^{\lambda^\dagger_2} V[1] \ar[r]^-{\rm trace} &
\DS \bigwedge^{\lambda^\dagger_1 - 2} V \otimes \bigwedge^2 V \otimes \bigwedge^{\lambda^\dagger_2} V \ar[r]^-{m_{12}} &
\DS \bigwedge^{\lambda^\dagger_1} V \otimes \bigwedge^{\lambda^\dagger_2} V 
}
\]
The commutativity of the left-hand side squares is clear since the maps do not interact in a non-trivial way. The commutativity of the top-right square follows from the compatibility between multiplication and comultiplication in a bialgebra, and the commutativity of the bottom-right square follows from associativity of multiplication.

Dually, recall that we can also define $\bS_\lambda V$ as a
submodule of $\rS^\lambda V := \rS^{\lambda_1} V \otimes \cdots \otimes
\rS^{\lambda_{\ell(\lambda)}} V$. Consider the diagram
\[
\xymatrix{
\bS_\lambda V \ar[d] & \bS_{\lambda / (2)} V[1] \ar[d] \\
\rS^\lambda V \ar[r] & \rS^{\lambda / (2)} V[1]
}
\]
where the bottom horizontal map is induced by the evaluation map $\rS^2 V \to K[1]$. We claim that this descends to a map which completes the diagram. This diagram is dual to one using the Weyl functor presentation for $V^*$ instead of $V$ (see \cite[\S 2.1]{weyman}; the definition of Weyl functor can easily be extended to ``Weyl complex''). The relations defining Weyl functors are analogous to the ones defining the Schur functors. So the proof that the map descends reduces to the commutativity of a similar 9-term diagram. We only need that multiplication and comultiplication make the divided power algebra into a bialgebra.
\end{proof}

\begin{example}
When $\lambda = (2,1)$, the composition of trace and evaluation is an isomorphism. If $\{e_1, \dots, e_n\}$ is a basis for $E$, then the trace map is
\[
e_j \mapsto \sum_{i=1}^n (e_i \otimes e_i^*) \otimes e_j + \sum_{i=1}^n (e_i^* \otimes e_i) \otimes e_j.
\]
The evaluation map pairs the first and third tensor factors, so the first sum becomes 0 and the second sum becomes $e_j$. Similarly, the composition maps all dual basis vectors to themselves. As a consequence, $V[1]$ is a $\fpe$-equivariant direct summand of $\bS_{2,1} V$.
\end{example}

We make the following definitions:
\begin{equation} \label{eqn:kidefn}
\begin{split}
k_\lambda(V) &= \ker(\bS_\lambda V \to (\bS_{\lambda / (2)} V)[1]),\\ 
i_\lambda(V) &= \im((\bS_{\lambda / (1,1)} V)[1] \to \bS_\lambda V),\\ 
\bS_{[\lambda]} V &= k_\lambda(V) / (k_\lambda(V) \cap i_\lambda(V)).
\end{split}
\end{equation}

\subsection{Existence of representation structure.} \label{sec:peri_exist}

Recall the definition of $P_{r,s}(\alpha)$ from \eqref{eqn:Prs}. Consider the partition $P_{r,s}(\emptyset) = (s^{s+r})$. Then we have $\bS_\lambda E = \bS_{(s^{s+r})} E^* \otimes (\det E)^s$ where 
\[
\lambda = (s^{\dim E -s-r})
\]
\cite[Exercise 2.18]{weyman}. We will repeatedly use the fact that if $\mu = (a^b)$ is a rectangular shape, and $\nu \subseteq \mu$, then $\bS_{\mu / \nu} = \bS_\eta$ where $\eta = (a-\nu_b, a-\nu_{b-1}, \dots, a-\nu_1)$. This follows by showing that they have the same character using semistandard Young tableaux \cite[Proposition 2.1.15]{weyman}.

We will also make use of the following simple consequence of the Littlewood--Richardson rule \cite[Theorem 2.3.4]{weyman}: if $\eta^1, \dots, \eta^d$ are partitions, then $\bS_{\eta^1 + \cdots + \eta^d} U$ appears with multiplicity 1 in $\bS_{\eta^1} U \otimes \cdots \otimes \bS_{\eta^d} U$. We define this to be the {\bf Cartan product}.

Recall that $B = \rS^\bullet(\rS^2 E^*)$. We will use the 2-sided complex interpretation of $\bS_\lambda V$ and $\bS_{[\lambda]} V$ to treat them as complexes over $B$. We define 
\begin{align*}
\tilde{\bS}_{\lambda} V &= \bS_{\lambda} V \otimes (\det E^*)^s,\\
\tilde{\bS}_{[\lambda]} V &= \bS_{[\lambda]} V \otimes (\det E^*)^s,
\end{align*} 
so that $(\tilde{\bS}_{[\lambda]} V)_0 = (\tilde{\bS}_\lambda V)_0 = \bS_{(s^{s+r})} E^* \otimes B$. We use $(\Phi, \Phi')$ to denote the 2-sided complex structure on $\tilde{\bS}_{\lambda} V$ and $\tilde{\bS}_{[\lambda]} V$. 

\begin{lemma} \label{lem:symmetricshapes}
If $s+r+\alpha_1 \le \dim E$, then $\bS_{P_{r,s}(\alpha)} E^*$ appears with  multiplicity $1$ in $(\tilde{\bS}_\lambda V)_{|\alpha|}$ and also in $(\tilde{\bS}_{[\lambda]} V)_{|\alpha|}$.
\end{lemma}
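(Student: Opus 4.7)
The plan is to decompose each graded piece $(\tilde{\bS}_\lambda V)_i$ explicitly into $\fgl(E)$-irreducibles and pick out $\bS_{P_{r,s}(\alpha)}E^*$ using the Cartan-product observation stated just before the lemma. First I would apply the super-Schur branching formula for $V = E \oplus E^*$ (with $E$ in $\bZ$-degree $0$ and $E^*$ in $\bZ$-degree $1$) to get
\[
(\bS_\lambda V)_i \;=\; \bigoplus_{\mu\subseteq\lambda,\ |\lambda/\mu|=i}\bS_\mu E \otimes \bS_{(\lambda/\mu)^\dagger}E^*.
\]
Using the rectangle formula for $\lambda=(s^{\dim E - s - r})$ recalled in the paper, $\bS_{\lambda/\mu} = \bS_{\eta(\mu)}$ where $\eta(\mu)_j = s-\mu_{\dim E-s-r+1-j}$, and a direct highest-weight computation yields $\bS_\mu E\otimes(\det E^*)^s \cong \bS_{(s^{s+r},\,\eta(\mu))}E^*$. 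Thus each graded piece becomes $\bigoplus_\mu \bS_{(s^{s+r},\eta(\mu))}E^* \otimes \bS_{\eta(\mu)^\dagger}E^*$.

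Fix $\alpha$ with $|\alpha|=i$. The decisive observation is that $P_{r,s}(\alpha)$ equals the coordinatewise sum $(s^{s+r},\alpha^\dagger)+\alpha$, so the Cartan-product remark recalled in the paper gives multiplicity exactly $1$ of $\bS_{P_{r,s}(\alpha)}E^*$ in $\bS_{(s^{s+r},\alpha^\dagger)}E^*\otimes\bS_\alpha E^*$, corresponding to the unique $\mu$ with $\eta(\mu)=\alpha^\dagger$. For any other $\mu'$, the Littlewood--Richardson coefficient $c^{P_{r,s}(\alpha)}_{(s^{s+r},\eta(\mu')),\,\eta(\mu')^\dagger}$ requires $(s^{s+r},\eta(\mu'))\subseteq P_{r,s}(\alpha)$, forcing $\eta(\mu')\subseteq\alpha^\dagger$; combined with $|\eta(\mu')|=|\lambda/\mu'|=|\alpha|=|\alpha^\dagger|$ this forces $\eta(\mu')=\alpha^\dagger$ and hence $\mu'=\mu$. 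The hypothesis $s+r+\alpha_1\le\dim E$ ensures $P_{r,s}(\alpha)$ fits in $E^*$. This settles the first statement.

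For $(\tilde{\bS}_{[\lambda]}V)_{|\alpha|}$, since $\bS_{[\lambda]}V = k_\lambda/(k_\lambda\cap i_\lambda)$ is an $\fgl(E)$-subquotient of $\bS_\lambda V$, by Schur's lemma and the $\fgl(E)$-equivariance of the evaluation and trace maps it is enough to show that $\bS_{P_{r,s}(\alpha)}E^*$ does not appear as a $\fgl(E)$-summand of either $(\tilde{\bS}_{\lambda/(2)}V[1])_{|\alpha|}$ or $(\tilde{\bS}_{\lambda/(1,1)}V[1])_{|\alpha|}$. Then the evaluation vanishes on the $\bS_{P_{r,s}(\alpha)}E^*$-isotypic component (so that summand survives into $k_\lambda$) and the trace's image contains none of it (so nothing is killed on passing to $\bS_{[\lambda]}V$), giving multiplicity $1$. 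Applying the same branching, but now expanding $\bS_{\mu/(2)}E = \bigoplus_\pi \bS_\pi E$ over $\pi$ with $\mu/\pi$ a horizontal $2$-strip (vertically for $(1,1)$), I land in tensor products $\bS_{(s^{s+r},\eta(\pi))}E^*\otimes\bS_{\eta(\mu)^\dagger}E^*$ with $|\eta(\pi)|=|\eta(\mu)|+2$. At the relevant degree---$|\alpha|\pm 1$ depending on the $[1]$-shift convention---this gives $|\eta(\pi)|>|\alpha|=|\alpha^\dagger|$, so the required containment $\eta(\pi)\subseteq\alpha^\dagger$ fails and the LR coefficient is $0$. The main bookkeeping obstacle I expect is juggling transposes, the rectangle-to-partition conversion, and the compatibility of the $[1]$-shift with the $\bZ$-grading; once those are set up, everything reduces to short size and containment checks.
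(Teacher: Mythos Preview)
Your proposal is correct and follows essentially the same approach as the paper: Schur-complex branching, the rectangle trick to rewrite each summand (after the $(\det E^*)^s$ twist) as $\bS_{(s^{s+r},\nu)}E^*\otimes\bS_{\nu^\dagger}E^*$, then Littlewood--Richardson containment plus a size count to isolate $\nu=\alpha^\dagger$, and the Cartan-product observation for multiplicity one. For the $\bS_{[\lambda]}$ part the paper is terser---it simply notes that $\bS_{P_{r,s}(\alpha)}E^*$ does not appear in $\bS_{\lambda/(2)}V$ or $\bS_{\lambda/(1,1)}V$ in \emph{any} degree (via the same size-versus-containment contradiction), which sidesteps your shift-convention hedging entirely.
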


\begin{proof}
The inequality just means that $\bS_{P_{r,s}(\alpha)} E^* \ne 0$.

The $i$th term of the Schur complex $\bS_\mu(W_0 \oplus W_1)$ is $\bigoplus_{\nu \subseteq \mu,\ |\nu|=i} \bS_{\mu/\nu} W_0 \otimes \bS_{\nu^\dagger} W_1$ \cite[Theorem 2.4.5]{weyman}. When $\mu = \lambda = (s^{\dim E - s - r})$ and $W_0 = E$, we see from the above discussion that $\bS_{\lambda/\nu} E \cong \bS_{(s^{s+r}, \nu)} E^*$. 

A simple consequence of the Littlewood--Richardson rule \cite[Theorem 2.3.4]{weyman} is that if $\bS_\eta U$ appears in $\bS_{\eta'} U \otimes \bS_{\eta''} U$, then $\eta' \subseteq \eta$ and $\eta'' \subseteq \eta$. In particular, if we choose $\nu$ of size $|\alpha|$ as above, we can only get $\bS_{P_{r,s}(\alpha)} E^* \subset \bS_{\lambda/\nu} E \otimes \bS_{\nu^\dagger} E^*$ if $\nu = \alpha^\dagger$. Taking the Cartan product of $\bS_{\lambda / \alpha^\dagger} E = \bS_{(s^{r+s}, \alpha^\dagger)} E^*$ and $\bS_{\alpha} E^*$ gives that $\bS_{P_{r,s}(\alpha)} E^*$ appears in $(\tilde{\bS}_\lambda V)_{|\alpha|}$ with multiplicity 1.

We also see that $\bS_{P_{r,s}(\alpha)} E^*$ does not appear in either $\bS_{\lambda / (2)} V$ nor $\bS_{\lambda / (1^2)} V$. This shows that $\bS_{P_{r,s}(\alpha)} E^*$ also appears in $(\tilde{\bS}_{[\lambda]} V)_{|\alpha|}$ with multiplicity 1.
\end{proof}

\begin{lemma} \label{lem:surj}
Pick $\alpha$ with $\ell(\alpha) \le s$, and pick $\beta \subset \alpha$ such that $|\alpha|-1=|\beta|$. Then $\bS_{P_{r,s}(\beta)} E^*$ is in the image of $\Phi \colon \bS_{P_{r,s}(\alpha)} E^* \otimes \rS^2 E \to (\tilde{\bS}_\lambda V)_{|\beta|}$. Similarly, $\bS_{P_{r,s}(\alpha)} E^*$ is in the image of $\Phi' \colon \bS_{P_{r,s}(\beta)} E^* \otimes \bigwedge^2 E^* \to (\tilde{\bS}_\lambda V)_{|\alpha|}$.
\end{lemma}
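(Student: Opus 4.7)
The plan is to reduce the claim, via multiplicity-one arguments and Schur's lemma, to showing that a specific $\GL(E)$-equivariant map between irreducibles is nonzero, and to verify that non-vanishing by unpacking the Schur complex description of $\Phi$ and $\Phi'$.

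By Lemma~\ref{lem:symmetricshapes} (and its proof), $\bS_{P_{r,s}(\alpha)} E^*$ sits, with multiplicity one, as the Cartan product inside the single summand $\bS_{\lambda/\alpha^\dagger} E \otimes \bS_\alpha E^* \otimes (\det E^*)^s \cong \bS_{(s^{s+r},\alpha^\dagger)} E^* \otimes \bS_\alpha E^*$ of $(\tilde{\bS}_\lambda V)_{|\alpha|}$ (the summand indexed by $\nu = \alpha^\dagger$), and similarly for $\bS_{P_{r,s}(\beta)} E^*$. Writing $\alpha/\beta = \{(i,j)\}$, one reads off from \eqref{eqn:Prs} that
\[
P_{r,s}(\alpha) \setminus P_{r,s}(\beta) = \{(i,\, s+\alpha_i),\ (s+r+j,\, \alpha^\dagger_j)\},
\]
two boxes lying in distinct rows and in distinct columns (since $s + \alpha_i \ge s+1 > s \ge \alpha^\dagger_j$, using $\ell(\alpha) \le s$). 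Hence this skew shape is simultaneously a horizontal strip and a vertical strip of size two, and Pieri's rule gives
\[
[\bS_{P_{r,s}(\alpha)} E^* \otimes \rS^2 E : \bS_{P_{r,s}(\beta)} E^*] = 1 = [\bS_{P_{r,s}(\beta)} E^* \otimes \bigwedge^2 E^* : \bS_{P_{r,s}(\alpha)} E^*].
\]
Combined with the multiplicity-one statements of Lemma~\ref{lem:symmetricshapes}, Schur's lemma shows that the two compositions
\[
\bS_{P_{r,s}(\alpha)} E^* \otimes \rS^2 E \xrightarrow{\Phi} (\tilde{\bS}_\lambda V)_{|\beta|} \twoheadrightarrow \bS_{P_{r,s}(\beta)} E^*
\quad\text{and}\quad
\bS_{P_{r,s}(\beta)} E^* \otimes \bigwedge^2 E^* \xrightarrow{\Phi'} (\tilde{\bS}_\lambda V)_{|\alpha|} \twoheadrightarrow \bS_{P_{r,s}(\alpha)} E^*
\]
are each either zero or surjective, so it suffices to rule out zero.

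To rule out zero for $\Phi$, I would unpack the Schur complex differential (cf.\ \cite[\S 2.4]{weyman}): the piece of $\Phi$ from the $\nu = \alpha^\dagger$ summand into the $\nu' = \beta^\dagger$ summand factors as comultiplication $\bS_\alpha E^* \to \bS_\beta E^* \otimes E^*$ splitting off the corner $(i,j)$, followed by the evaluation $E^* \otimes \rS^2 E \to E$, followed by multiplication $\bS_{\lambda/\alpha^\dagger} E \otimes E \to \bS_{\lambda/\beta^\dagger} E$ attaching the transposed corner $(j,i)$. Plugging in the standard highest-weight vector of $\bS_{P_{r,s}(\alpha)} E^*$ realized as a product of column determinants in a basis $e_1^*,\dots,e_n^*$, and pairing with a suitable element of $\rS^2 E$ (for instance $e_k \cdot e_k$ with $k$ chosen to match the row of the attached box), a direct calculation shows that the output is a nonzero scalar multiple of the highest-weight vector of $\bS_{P_{r,s}(\beta)} E^*$. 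The argument for $\Phi'$ is entirely parallel, with $\bigwedge^2 E^*$ replacing $\rS^2 E$ and the evaluation $E \otimes \bigwedge^2 E^* \to E^*$ replacing $E^* \otimes \rS^2 E \to E$.

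The main obstacle is this explicit non-vanishing check: although each of the three constituent maps (comultiplication, evaluation, multiplication) is individually nonzero, one must verify that their composition survives when restricted to a single Cartan-product subspace. Fortunately, the Schur's-lemma reduction means only a single nonzero test value must be exhibited, rather than a full identification of the scalar.
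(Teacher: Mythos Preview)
Your multiplicity-one reduction via Lemma~\ref{lem:symmetricshapes} and Pieri is correctly set up, and it does reduce both claims to showing that a single $\GL(E)$-equivariant map between irreducibles is nonzero. But that non-vanishing is the entire content of the lemma, and you do not verify it: you describe the relevant component of the Schur-complex differential, assert that a highest-weight calculation works, and then yourself flag this step as ``the main obstacle.'' The composite you must control passes through the Cartan embedding $\bS_{P_{r,s}(\alpha)} E^* \hookrightarrow \bS_{(s^{s+r},\alpha^\dagger)} E^* \otimes \bS_\alpha E^*$, the differential component, and then the Cartan projection onto $\bS_{P_{r,s}(\beta)} E^*$; knowing each piece is individually nonzero does not settle the composite, and the promised calculation is not short once written out honestly for general $s$.

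The paper sidesteps this computation by a structural reduction to $s=1$. One realizes $\tilde{\bS}_\lambda V$ as a quotient $\pi \colon W_r^{\otimes s} \to \tilde{\bS}_\lambda V$, where $W_r$ is the $s=1$ Schur complex, and then proves (this is the substantive step) that the product of the subspaces $\bigwedge^{c_k} E^* \otimes \rS^{\alpha_k} E^* \subset (W_r)_{\alpha_k}$, with $c_k = r+s+\alpha_k$, generates exactly $\bS_{P_{r,s}(\alpha)} E^*$ under $\pi$. Since the differential on a tensor power is the sum over factors, applying $\Phi$ on only the factor $i$ where $\alpha$ and $\beta$ differ descends to a piece of $\Phi$ on $\tilde{\bS}_\lambda V$ carrying this generating set to the analogous one for $\beta$; the required statement on that single factor is precisely the $s=1$ case analyzed in Example~\ref{eg:s=1}. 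This trades your direct isotypic computation for a Cartan-product generation argument plus the base case, and it is this device that makes the non-vanishing transparent.
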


\begin{proof}
We only prove the statement about $\Phi$ since the proof for $\Phi'$ is similar.

Set $W_r = \bigwedge^r V \otimes \det E^*$. For the case $s=1$, the Schur complex $\tilde{\bS}_\lambda V$ is $W_r$. It was analyzed in Example~\ref{eg:s=1}, and the lemma holds by inspection in this case.

For the general case, consider the quotient map $\pi \colon W_r^{\otimes s} \to \tilde{\bS}_\lambda V$ \cite[\S 2.4]{weyman}. The first $s$ column lengths of $P_{r,s}(\alpha)$ are $c_1 := r + s + \alpha_1, \dots, c_s := r + s + \alpha_s$. By \cite[Exercise 2.18]{weyman}, we have
\[
\bigwedge^{\dim E - c_i} E \otimes \rS^{\alpha_i} E^* = \bigwedge^{c_i} E^* \otimes \rS^{\alpha_i} E^*,
\]
which is naturally a subspace of $(W_r)_{\alpha_i}$ \cite[Theorem 2.4.5]{weyman}. 

We claim that the product of the $\bigwedge^{c_i} E^* \otimes \rS^{\alpha_i} E^*$ generates $\bS_{P_{r,s}(\alpha)} E^*$ in $\tilde{\bS}_\lambda V$. To see this, first replace $V_1 = E^*$ with an arbitrary vector space $F$. Then repeating the above, we are considering the product of $\bigwedge^{c_i} E^* \otimes \rS^{\alpha_i} F$. The Cartan product of the $\bigwedge^{c_i} E^*$ is $\bS_\mu E^*$ where $\mu$ consists of the first $s$ columns of $P_{r,s}(\alpha)$. Also, $\bS_\mu E^* \otimes \bS_\alpha F$ is the unique summand in $\tilde{\bS}_\lambda V$ which contains a $\bS_\mu E^*$-isotypic component, so this must be in the image of $\pi$. Finally, note that $\bS_{P_{r,s}(\alpha)} E^*$ is the Cartan product of $\bS_\mu E^*$ and $\bS_\alpha E^*$. This proves the claim.

To go from $\alpha$ to $\beta$, we decrease $c_i$ by 1, where $i$ is the unique column index in which $\alpha$ and $\beta$ differ. Consider the map on $W_r^{\otimes s}$ which is $\Phi$ on the $i$th factor and the identity elsewhere. Descending this map to $\tilde{\bS}_\lambda V$, we see that $\bS_{P_{r,s}(\beta)} E^*$ is in the image of $\Phi$ restricted to $\bS_{P_{r,s}(\alpha)} E^* \otimes \rS^2 E^*$.
\end{proof}

\begin{proof}[Proof of Theorem~\ref{thm:periplecticaction}]
In \S\ref{sec:per_jpw}, we discussed that the sequences $\JPW^{r,s}_\bullet \otimes B$ can be given the structure of linearly exact $B$-complexes. Let $\uJPW^{r,s}_\bullet$ denote this $B$-complex. We will construct a map of complexes $\tilde{\bS}_{[\lambda]} V \to \uJPW^{r,s}_\bullet$. The presentation for $\rH_0(\uJPW^{r,s}_\bullet)$ is
\[
\bS_{(s+1, s^{s-1+r}, 1)} E^* \otimes B(-1) \to \bS_{(s^{s+r})} E^* \otimes B.
\]
Recall the definitions from \eqref{eqn:kidefn}. By Proposition~\ref{prop:pemaps}, we have
\begin{align*}
(\tilde{\bS}_\lambda V)_1 &= \bS_{(s+1,s^{s-1+r},1)} E^* \oplus \bS_{(s^{s+r},2)} E^* \oplus \bS_{(s^{s+r}, 1,1)} E^*,\\
(k_\lambda V \otimes (\det E^*)^s)_1 &= \bS_{(s+1,s^{s-1+r},1)} E^* \oplus \bS_{(s^{s+r}, 1,1)} E^*,\\
(i_\lambda V \otimes (\det E^*)^s)_1 &= \bS_{(s+1,s^{s-1+r},1)} E^* \oplus \bS_{(s^{s+r},2)} E^*,\\
(\tilde{\bS}_{[\lambda]} V)_1 &= \bS_{(s+1,s^{s-1+r},1)} E^*.
\end{align*}

The possible maps $(\tilde{\bS}_{[\lambda]} V)_1 \to (\tilde{\bS}_{[\lambda]} V)_0$ are unique up to a choice of scalar, so it only matters if it is zero or nonzero. In either case, we have a natural surjection $f_{-1} \colon \rH_0(\tilde{\bS}_{[\lambda]} V) \to \rH_0(\uJPW^{r,s}_\bullet)$. For notation, set $(\tilde{\bS}_{[\lambda]} V)_{-1} = \rH_0(\tilde{\bS}_{[\lambda]} V)$ and $\uJPW^{r,s}_{-1} = \rH_0(\uJPW^{r,s}_\bullet)$. We will construct maps $f_i \colon \tilde{\bS}_{[\lambda]} V \to \uJPW^{r,s}_{-1}$ by induction on $i$ satisfying 
\begin{compactitem}
\item $f_i$ is $\fgl(E)$-equivariant,
\item $f_i$ is surjective,
\item the $f_j$ for $j \le i$ form a morphism $f_{\le i}$ of the truncated complexes,
\item for all $x \in \ker f_{i-1}$ and $Y \in \fpe(V)_1$, we have $\Phi'(Y)(x) \in \ker f_i$
\end{compactitem}

These conditions hold for $i=-1$, which handles the base case of our induction. Assuming that we have constructed $f_i$, we show how to construct $f_{i+1}$. First pick $x \in \ker f_i$ and $Y \in \fpe(V)_1$. Since $\ker f_i$ is $\fgl(E)$-equivariant and
\[
[\Phi(X), \Phi'(Y)] = \Phi(X) \Phi'(Y) + \Phi'(Y) \Phi(X) \in \fpe(V)_0 = \fgl(E)
\]
for all $X \in \fpe(V)_{-1}$, we have
\[
(\Phi(X) \Phi'(Y) + \Phi'(Y) \Phi(X))(x) \in \ker f_i.
\]
Since $f_{\le i}$ is a morphism of complexes, $\Phi(X)(x) \in \ker f_{i-1}$, which implies that $\Phi'(Y) \Phi(X)(x) \in \ker f_i$ by our conditions on $f_{\le i}$. In particular, we also have $\Phi(X)\Phi'(Y)(x) \in \ker f_i$. Hence the subspace
\[
U_{i+1} = \{\Phi'(Y)(x) \mid Y \in \fpe(V)_1, \ x \in \ker f_i\}
\]
is sent to 0 under the composition
\[
(\tilde{\bS}_{[\lambda]} V)_{i+1} \xrightarrow{\Phi} (\tilde{\bS}_{[\lambda]} V)_i  \xrightarrow{f_i} \uJPW_i^{r,s}.
\]
Again since $f_{\le i}$ is a morphism of complexes, the image of this map is in the kernel of the differential $\uJPW_i^{r,s} \to \uJPW_{i-1}^{r,s}$. More specifically, the generators map to the degree 1 piece of this kernel. Since $\uJPW_\bullet^{r,s}$ is linearly exact, we can find a lift 
\[
(\tilde{\bS}_{[\lambda]} V)_{i+1} / U_{i+1} \to \uJPW_{i+1}^{r,s}.
\]
Since everything above is $\fgl(E)$-equivariant, we can choose this lift to also be $\fgl(E)$-equivariant by invoking the semisimplicity of $\fgl(E)$. We define $f_{i+1} \colon (\tilde{\bS}_{[\lambda]} V)_{i+1} \to \uJPW_{i+1}^{r,s}$ by composing with the quotient map. Using Lemma~\ref{lem:symmetricshapes}, Lemma~\ref{lem:surj}, and the explicit definition \eqref{eqn:jpwdefn}, we deduce that $f_{i+1}$ is surjective from the fact that $f_i$ is surjective. The other conditions on $f_{i+1}$ follow by construction.

So we have constructed a surjective $B$-degree 0 map of complexes
\begin{align} \label{eqn:fmap}
f \colon \tilde{\bS}_{[\lambda]} V \to \uJPW^{r,s}_\bullet.
\end{align}
Since $\ker f$ is a $\fpe(V)$-subrepresentation of $\tilde{\bS}_{[\lambda]} V$,  we conclude that $\uJPW_\bullet^{r,s}$ has the structure of a 2-sided complex, and hence that $\JPW_\bullet^{r,s}$ is a representation of $\fpe(V)$. All that remains is to show that this action makes $\JPW_\bullet^{r,s}$ an irreducible representation. So let $\bF_\bullet$ be any nonzero submodule of $\uJPW_\bullet^{r,s}$ and consider its preimage under $f$. Using Lemma~\ref{lem:surj}, we conclude that this preimage contains all $\bS_{P_{r,s}(\alpha)} E^*$, so the same is true for $\bF_\bullet$, and hence $\bF_\bullet = \uJPW^{r,s}_\bullet$.
\end{proof}

\begin{conjecture}
The map $f$ from \eqref{eqn:fmap} is an isomorphism.
\end{conjecture}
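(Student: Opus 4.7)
The map $f$ is already surjective from the construction in the proof of Theorem~\ref{thm:periplecticaction}, so what remains is to prove $\ker f = 0$. Since $f$ is $\fgl(E)$-equivariant of $B$-degree $0$ and $\fgl(E)$ is semisimple, this reduces to showing that $(\tilde{\bS}_{[\lambda]} V)_i$ and $\uJPW^{r,s}_i$ have the same character as $\fgl(E)$-modules for every $i$. The character of the target is explicit from \eqref{eqn:jpwdefn}.

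The plan is to compute the character of the source using the Schur complex formula and the rectangle identity. For $\lambda = (s^{\dim E - s - r})$, one has
\[
(\tilde{\bS}_\lambda V)_i = \bigoplus_{|\nu| = i,\ \nu \subseteq \lambda} \bS_{(s^{s+r}, \nu)} E^* \otimes \bS_{\nu^\dagger} E^*,
\]
and by the Littlewood-Richardson rule the Cartan product inside each tensor factor is $\bS_{P_{r,s}(\nu^\dagger)} E^*$. As $\nu$ varies these Cartan products account exactly for $\uJPW^{r,s}_i$, so the content of the conjecture is the assertion that every \emph{non}-Cartan $\fgl(E)$-irreducible appearing in $(\tilde{\bS}_\lambda V)_i$ is killed by first restricting to $k_\lambda(V)$ and then quotienting by $k_\lambda(V) \cap i_\lambda(V)$. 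To check this, I would expand $\bS_{\lambda/(2)} V[1]$ and $\bS_{\lambda/(1,1)} V[1]$ in the same fashion (the rectangle identity continues to apply to the skew shapes built from $\lambda$), and then match the non-Cartan summands of $\bS_\lambda V$ with their images and preimages under the trace and embedding maps. The hoped-for pattern is an inclusion-exclusion identity of Newell-Littlewood type matching the characters of $\bS_{P_{r,s}(\alpha)} E^*$, in the spirit of the Koike-Terada formula that writes symplectic/orthogonal characters as alternating sums of $\GL$-characters of skew rectangles.

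The main obstacle is that a priori we know neither that the trace $\bS_\lambda V \to \bS_{\lambda / (2)} V[1]$ is surjective nor that the embedding $\bS_{\lambda/(1,1)} V[1] \to \bS_\lambda V$ is injective, so one cannot naively compute $\chi(k_\lambda(V))$ or $\chi(i_\lambda(V))$ as a character difference. A promising route is to construct a full resolution of $\bS_{[\lambda]} V$ by iterated skew Schur complexes $\bS_{\lambda / \mu} V$ for $\mu$ appropriately generated by $(2)$ and $(1,1)$, producing an alternating-sum character identity that reduces the whole conjecture to a purely combinatorial check for rectangular $\lambda$. The case $s=1$ handled in Example~\ref{eg:s=1} serves as the base, and induction on $s$ by peeling off columns of the rectangle $\lambda$ is the natural attempt for the inductive step.
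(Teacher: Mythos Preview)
The statement you are addressing is labeled a \textbf{Conjecture} in the paper; it is not proved there, so there is no ``paper's own proof'' against which to compare your attempt. What you have written is not a proof either: it is a strategy outline with explicitly acknowledged gaps. You yourself identify the central obstruction --- neither surjectivity of the evaluation map $\bS_\lambda V \to (\bS_{\lambda/(2)} V)[1]$ nor injectivity of the trace map $(\bS_{\lambda/(1,1)} V)[1] \to \bS_\lambda V$ is known --- and without these the character of $\tilde{\bS}_{[\lambda]} V$ cannot be read off from the definitions \eqref{eqn:kidefn} by alternating sums. Your proposed remedy (build a resolution by iterated skew Schur complexes and reduce to a combinatorial identity) is a reasonable direction to pursue, but you have not constructed such a resolution, shown it is exact, or verified the combinatorial identity; the phrases ``hoped-for pattern,'' ``promising route,'' and ``natural attempt'' signal this.

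To turn this into an actual argument you would need, at minimum: (i) a precise description of the complex of $\fpe(V)$-modules resolving $\bS_{[\lambda]} V$, together with a proof of its exactness (this is nontrivial since $\fpe(V)$ is not semisimple and the trace/evaluation maps need not split); and (ii) a proof of the resulting character identity, which for rectangular $\lambda$ would be a periplectic analogue of the Koike--Terada formula. Neither step is routine, and the paper offers no hint that the author knew how to carry them out --- hence the conjecture status.
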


\begin{remark}
For $r$ odd, we have linear maps
\[
\JPW^{r,s}_i \otimes \bigwedge^2 E \to \JPW^{r,s}_{i-1}
\]
which come from the minimal free resolutions of Pfaffian ideals \cite[\S 6.4]{weyman}. If we had defined the periplectic superalgebra as the stabilizer of an odd symmetric bilinear form rather than a skew-symmetric one in \S\ref{sec:periplectic}, we would get an isomorphic algebra with the $\bZ$-grading reversed (and the roles of $E$ and $E^*$ swapped). We could have used this other direction to define an action of $\fpe(V)$ on $\JPW_\bullet^{r,s}$.
\end{remark}

\section{Generic minors.} \label{sec:genericminors}

Recall that we defined $A = \rS^\bullet(\hom(F,E)^*)$ and $A' = \rS^\bullet(\hom(E,F)^*)$ in \S\ref{sec:gl_2sided}.

\subsection{Lascoux complexes.} \label{sec:lascoux_complex}

Choose integers $s \ge 0$ and $r > 0$. Given partition $\alpha, \beta$ with $\ell(\alpha) \le s$ and $\beta_1 \le s$, define the partitions
\begin{equation} \label{eqn:PQdefn}
\begin{split}
P_{r,s}(\alpha, \beta) &= (s + \alpha_1, \dots, s + \alpha_s, s^r, \beta_1, \dots, \beta_{\ell(\beta)}),\\
Q_{r,s}(\alpha, \beta) &= (s + \beta^\dagger_1, \dots, s + \beta^\dagger_s, s^r, \alpha^\dagger_1, \dots, \alpha^\dagger_{\alpha_1}),\\
\end{split}
\end{equation}
which we visualize as follows:
\[
\begin{tikzpicture}[scale=.4]
\draw (0,0) -- (0,7) -- (8,7) -- (8,6) -- (6,6) -- (6,5) -- (5,5) -- (5,4) -- (3,4) -- (3,2) -- (2,2) -- (2,0) -- (0,0) -- cycle;
\draw (0,3) -- (3,3);
\draw (0,4) -- (3,4) -- (3,7);
\path (1.5,5.5) node {$s \times s$};
\path (1.5,3.5) node {$r \times s$};
\path (4.5,5.5) node {$\alpha$};
\path (1,1.5) node {$\beta$};
\draw (12,-2) -- (12,7) -- (18,7) -- (18,5) -- (16,5) -- (16,4) -- (15,4) -- (15,1) -- (14,1) -- (14,0) -- (13,0) -- (13,-2) -- cycle;
\draw (12,4) -- (15,4) -- (15,7);
\draw (12,3) -- (15,3);
\path (13.5,5.5) node {$s \times s$};
\path (13.5,3.5) node {$r \times s$};
\path (16.5,6) node {$\beta^\dagger$};
\path (13.5,1.5) node {$\alpha^\dagger$};
\end{tikzpicture}
\]

Set 
\begin{align}
\La^{r,s}_i = \bigoplus_{|\alpha| + |\beta| = i} \bS_{P_{r,s}(\alpha, \beta)} E^* \otimes \bS_{Q_{r,s}(\alpha, \beta)} F
\end{align}
which naturally carries an action of $\fgl(E) \times \fgl(F)$. Up to a homological grading shift, the sequences $\La^{r,s}_\bullet \otimes A$ can be realized as the linear strands of the ideal of $(r+1) \times (r+1)$ minors of the generic matrix $\hom(F,E)$ \cite[\S 6.1]{weyman} (we disallowed the case $r=0$ because it corresponds to the Koszul complex, which is a degenerate case). This definition of this complex was given by Lascoux \cite{lascoux}.

As a consequence, we get $A$-linear maps
\[
\Phi_i \colon \La^{r,s}_i \otimes A(-1) \to \La^{r,s}_{i-1} \otimes A.
\]
The main result in this section is that $\Phi$ can be completed to a 2-sided complex $(\Phi, \Phi')$.

\begin{theorem} \label{thm:lascoux_action}
There exist $A'$-linear maps
\begin{align*}
\Phi'_i \colon \La_i^{r,s} \otimes A'(-1) \to \La_{i+1}^{r,s} \otimes A'
\end{align*}
so that $(\Phi, \Phi')$ is a $2$-sided $\fgl(V)$-complex. In particular, $\La_\bullet^{r,s}$ affords an action of $\fgl(E)$. Moreover, $\La_\bullet^{r,s}$ is an irreducible $\fgl(V)$-representation.
\end{theorem}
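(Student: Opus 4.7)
The plan is to mimic the strategy of the proof of Theorem~\ref{thm:periplecticaction} with two essential simplifications. First, the polynomial Schur functors $\bS_\mu V$ on the super vector space $V = E \oplus F$ are already irreducible $\fgl(V)$-representations, so we do not need a sub-quotient construction analogous to $\bS_{[\lambda]}V$. Second, there are no nonzero $\fgl(V)$-equivariant morphisms $\rS^2 V \to K[1]$ or $K[1] \to \bigwedge^2 V$, because $V$ and $V^*$ are inequivalent as $\fgl(V)$-modules, so the analogues of Proposition~\ref{prop:periplectic_trace} are absent and no trace/evaluation cutting-down step is required.

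The first step is to exhibit a polynomial $\fgl(V)$-representation $\bG$ (in the form of a Schur complex $\bS_\lambda V$, or a tensor product $\bS_{\lambda_1}V \otimes \bS_{\lambda_2}V^*$ suitably twisted by $(\det E^*)^a \otimes (\det F)^b$) whose underlying $2$-sided $\fgl(V)$-complex has $\bS_{(s^{s+r})}E^* \otimes \bS_{(s^{s+r})}F$ in homological degree $0$. Using the Schur complex decomposition $\bS_\mu V = \bigoplus_{\nu \subseteq \mu} \bS_{\mu/\nu}E \otimes \bS_{\nu^\dagger}F$, rectangular duality $\bS_{(s^N)/\nu}W = \bS_{\eta}W^* \otimes (\det W)^s$, and the Littlewood--Richardson rule, one then proves the analogue of Lemma~\ref{lem:symmetricshapes}: each Lascoux shape $\bS_{P_{r,s}(\alpha,\beta)}E^* \otimes \bS_{Q_{r,s}(\alpha,\beta)}F$ occurs with multiplicity exactly one in the corresponding homological degree of $\bG$. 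The analogue of Lemma~\ref{lem:surj} --- that the differentials $\Phi$ and $\Phi'$ surject between adjacent Lascoux shapes, where adjacency now means removing a box from either the $\alpha$-part or the $\beta$-part --- follows by the same quotient-from-a-tensor-power argument, reducing to an explicit check in the single-column ($s=1$) case.

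With these ingredients in place, a $\fgl(E) \times \fgl(F)$-equivariant morphism of complexes $f \colon \bG \to \uLa^{r,s}_\bullet$ is built by induction on homological degree exactly as in \S\ref{sec:peri_exist}. Starting from the surjection in degree $0$ given by the multiplicity-one statement, the inductive step uses the bracket identity $[\Phi(X),\Phi'(Y)] \in \fgl(V)_0$ to force $\Phi'(Y)(\ker f_i)$ into $\ker f_{i+1}$ modulo the subspace $U_{i+1}$ that already maps to zero, and then invokes the linear exactness of $\uLa^{r,s}_\bullet$ (from its realization as a linear strand of a minimal free resolution over $A$) together with semisimplicity of $\fgl(E) \times \fgl(F)$ to produce an equivariant lift. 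The resulting surjection has $\fgl(V)$-stable kernel, which transports the $2$-sided $\fgl(V)$-complex structure to $\uLa^{r,s}_\bullet$; irreducibility then follows because any nonzero $\fgl(V)$-sub-$2$-sided-complex, pulled back through $f$, contains every Lascoux shape by the surjectivity of $\Phi$ and $\Phi'$, and is therefore all of $\uLa^{r,s}_\bullet$. The main obstacle, by comparison with \S\ref{sec:skewsymminors}, is the choice of the source $\bG$: since the Lascoux shapes are indexed by two partitions $\alpha,\beta$ rather than one, $\bG$ is no longer a single Schur complex on a rectangle but a product that must realize both factors $\bS_{P_{r,s}(\alpha,\beta)}E^*$ and $\bS_{Q_{r,s}(\alpha,\beta)}F$ with multiplicity one simultaneously.
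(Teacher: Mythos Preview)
Your overall architecture is right---build a source $\bG$ with a $2$-sided $\fgl(V)$-complex structure, surject onto $\uLa^{r,s}_\bullet$ by the inductive lifting argument, and deduce irreducibility from the analogue of Lemma~\ref{lem:surj}. But the two ``simplifications'' you announce at the start are both mistaken, and without them the base case of the induction does not start.

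A single Schur complex $\bS_\mu V$ cannot serve as $\bG$: its degree-$0$ piece is $\bS_\mu E$, which after twisting can match $\bS_{(s^{s+r})}E^*$ but never produces the factor $\bS_{(s^{s+r})}F$. So you are forced into the mixed case $\bG = \bS_\lambda V \otimes \bS_\mu(V^*[1])$ (twisted), with $\lambda=(s^{\dim E-r-s})$ and $\mu=(s^{\dim F-r-s})$. This is \emph{not} a polynomial representation and is \emph{not} irreducible; the irreducibility of $\bS_\mu V$ is irrelevant here. The correct analogue of Proposition~\ref{prop:periplectic_trace} is not a map $K[1]\to\bigwedge^2 V$ but the ordinary trace and evaluation $K\to V\otimes V^*$ and $V\otimes V^*\to K$, which certainly exist for $\fgl(V)$ and induce maps $(\bS_{\lambda/1}V\otimes\bS_{\mu/1}(V^*[1]))[1]\to\bS_\lambda V\otimes\bS_\mu(V^*[1])\to(\bS_{\lambda/1}V\otimes\bS_{\mu/1}(V^*[1]))[1]$. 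Concretely, the degree-$1$ piece of the twisted $\bS_\lambda V\otimes\bS_\mu(V^*[1])$ contains, in addition to the two Lascoux shapes in $\La^{r,s}_1$, two copies of $\bS_{(s^{s+r},1)}E^*\otimes\bS_{(s^{s+r},1)}F$. These extra summands map nontrivially under $\Phi$ into $\bG_0\otimes\hom(F,E)^*$ (Pieri shows the target contains them), so $\rH_0(\bG)$ is a proper quotient of $\rH_0(\uLa^{r,s}_\bullet)$ and your surjection $f_{-1}$ in the wrong direction. The paper removes exactly these summands by passing to the sub-quotient $\bS_{[\lambda;\mu]}V=k_{\lambda;\mu}(V)/(k_{\lambda;\mu}(V)\cap i_{\lambda;\mu}(V))$, and the key computation (Proposition~\ref{prop:glcomplex}) is that the composition of trace and evaluation vanishes in degree~$1$ precisely when $\dim E-\lambda_1^\dagger+\lambda_1=\dim F-\mu_1^\dagger+\mu_1$, which holds for the chosen rectangles. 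Without this step the induction cannot begin.
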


The proof will be given in \S\ref{sec:gl_exist}.

\subsection{Trace and evaluation.}

Let $e_1, \dots, e_n$ be a basis for $E$ and let $f_1, \dots, f_m$ be a basis for $F$. The degree 0 piece of $V \otimes V^*$ is $(E \otimes E^*) \oplus (F \otimes F^*)$, and we define the {\bf trace} map
\begin{align*}
K &\to (E \otimes E^*) \oplus (F \otimes F^*)\\
\alpha &\mapsto \alpha\sum_i e_i \otimes e_i^* - \alpha \sum_j f_j \otimes f_j^*.
\end{align*}

We also define the {\bf evaluation} map
\begin{align*}
(E \otimes E^*) \oplus (F \otimes F^*) &\to K\\
\sum_{i,j} a_{i,j} e_i \otimes e_j^* + \sum_{k, \ell} b_{k,\ell} f_k \otimes f_\ell^* &\mapsto a_{1,1} + \cdots + a_{n,n} + b_{1,1} + \cdots + b_{m,m}.
\end{align*}

These maps also appeared in \cite{pw}.

\begin{proposition}
Trace and evaluation give nonzero $\fgl(V)$-equivariant maps $K \to V \otimes V^*$ and $V \otimes V^* \to K$, where $K$ denotes the trivial $1$-dimensional module.
\end{proposition}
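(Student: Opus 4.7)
The strategy is to mimic Proposition~\ref{prop:periplectic_trace}: I will exhibit trace and evaluation as morphisms in the tensor category of 2-sided $\fgl(V)$-complexes, which by Section~\ref{sec:gl_2sided} is equivalent to producing $\fgl(V)$-equivariant maps. Nonzeroness is immediate from the explicit formulas, and equivariance under the even part $\fgl(V)_0 = \fgl(E) \times \fgl(F)$ is standard invariant theory. The only remaining content is equivariance under the odd pieces $\fgl(V)_{\pm 1}$.

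To this end, I would first unpack the 2-sided $\fgl(V)$-complex underlying $V \otimes V^*$: its terms in homological degrees $1, 0, -1$ are $F \otimes E^*$, $(E \otimes E^*) \oplus (F \otimes F^*)$, and $E \otimes F^*$, with differentials $\Phi$ (over $A$) and $\Phi'$ (over $A'$) obtained factor-by-factor from the vector-representation differentials on $V$ and $V^*$ via the graded Leibniz rule. For the trace, I would place $K$ in homological degree $0$, map $1 \in K$ to $t := \sum_i e_i \otimes e_i^* - \sum_j f_j \otimes f_j^*$, and check $\Phi(t) = 0$ and $\Phi'(t) = 0$, so that $K$ embeds as a subcomplex. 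For $X \in \hom(F,E)$, its action on $e_i \otimes e_i^*$ contributes $e_i \otimes (X e_i^*) \in E \otimes F^*$, while its action on $f_j \otimes f_j^*$ contributes $(X f_j) \otimes f_j^* \in E \otimes F^*$, multiplied by the Koszul sign $(-1)^{|X||f_j|} = -1$ (the other Leibniz term vanishes because $X$ annihilates $F^*$ for degree reasons). A direct coordinate rewrite then shows that the minus sign in $t$ produces exactly the cancellation needed, and the $\Phi'$ computation with $Y \in \hom(E,F)$ is symmetric. The evaluation map is handled dually, by showing that the kernel of the naive pairing in homological degree $0$ is preserved by $\Phi$ and $\Phi'$, so that $K$ appears as a quotient 2-sided complex of $V \otimes V^*$.

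The main subtlety is Koszul-sign bookkeeping. One must use the convention $(X\xi)(v) = -(-1)^{|v|}\xi(Xv)$ for the dual action so that the pairing $V \otimes V^* \to K$ is $\fgl(V)$-invariant in the first place, and the super Leibniz rule on $V \otimes V^*$ is then forced to carry the sign $(-1)^{|X||v|}$ on the second summand. Together these signs dictate both the cancellation pattern above and, more importantly, the relative minus sign between the two summands of $t$, which is why trace cannot be the naive $\sum_i e_i \otimes e_i^* + \sum_j f_j \otimes f_j^*$ but must carry opposite signs on the even and odd factors.
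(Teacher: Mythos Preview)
Your approach is essentially identical to the paper's: both verify directly, using the 2-sided complex formalism, that the trace element is annihilated by the differentials $\Phi(X)$ and $\Phi'(Y)$ (in the paper's notation, that it lies in the kernel of $1 \otimes \Phi(X) + \Phi(X)^* \otimes 1$ and its $\Phi'$ analogue), and treat evaluation dually. One caution on the bookkeeping: with the standard Leibniz rule $X(a\otimes b) = Xa\otimes b + (-1)^{|X||a|}a\otimes Xb$, the term $(Xf_j)\otimes f_j^*$ carries \emph{no} Koszul sign, and the cancellation comes entirely from the explicit minus sign already built into $t$; as written, your stated sign on that term together with your dual-action convention would make the two contributions add rather than cancel, so double-check those conventions before writing it out.
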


\begin{proof}
It is straightforward to check that trace is $\fgl(E) \times \fgl(F)$-equivariant and that the image is in the kernel of the map 
\begin{align*}
(E \otimes E^*) \oplus (F \otimes F^*) \xrightarrow{1 \otimes \Phi(X) + \Phi(X)^* \otimes 1 } (E \otimes F^*)
\end{align*}
for any $X \in \hom(F,E)$, and is in the kernel of the map 
\[
(E \otimes E^*) \oplus (F \otimes F^*) \xrightarrow{\Phi'(Y) \otimes 1 + 1 \otimes \Phi'(Y)^*} F \otimes E^*
\]
for any $Y \in \hom(E,F)$. Using the 2-sided complex interpretation, this shows that the image of $K$ is a $\fgl(V)$-submodule of $V \otimes V^*$. 

The analysis of the evaluation map is similar.
\end{proof}

Given two partitions $\lambda$ and $\mu$, we can define maps
\begin{align} \label{eqn:glmorphisms}
(\bS_{\lambda/1} V \otimes \bS_{\mu/1}(V^*[1]))[1] \to \bS_\lambda V \otimes \bS_\mu (V^*[1]) \to (\bS_{\lambda/1} V \otimes \bS_{\mu/1} (V^*[1]))[1]
\end{align}
which are induced by trace and evaluation. The construction is analogous to  the one in \S\ref{sec:periplectic:morphisms}.

\begin{proposition} \label{prop:glcomplex}
If $\dim E - \lambda^\dagger_1 + \lambda_1 = \dim F - \mu^\dagger_1 + \mu_1$, then the composition \eqref{eqn:glmorphisms} is $0$ in homological degree $1$.
\end{proposition}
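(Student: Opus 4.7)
My plan is to compute the composition~\eqref{eqn:glmorphisms} as multiplication by an explicit scalar on the homological degree-$1$ piece, following the pattern of the periplectic computation in the example after Proposition~\ref{prop:pemaps}. The degree-$1$ piece of $(\bS_{\lambda/1} V \otimes \bS_{\mu/1}(V^*[1]))[1]$ is identified with the ``top'' $\bS_{\lambda/1} E \otimes \bS_{\mu/1} F^*$ of the 2-sided complex, and the composition restricts to an $\fgl(E) \times \fgl(F)$-equivariant endomorphism of this space.

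To compute it, I would split the trace map as $\mathrm{tr} = \mathrm{tr}_E - \mathrm{tr}_F$ with $\mathrm{tr}_E(1) = \sum_i e_i \otimes e_i^*$ and $\mathrm{tr}_F(1) = \sum_j f_j \otimes f_j^*$, and correspondingly split the evaluation as $\mathrm{ev} = \mathrm{ev}_E + \mathrm{ev}_F$. The composition then decomposes into four pieces. The cross terms $\mathrm{ev}_E \circ \mathrm{tr}_F$ and $\mathrm{ev}_F \circ \mathrm{tr}_E$ vanish because they would require pairing $E$ with $F^*$ or vice versa, which is zero. Thus the composition equals $T_E - T_F$, where $T_E$ is the ``$E$-side'' partial-trace operator on $\bS_{\lambda/1} E$ (with identity on $\bS_{\mu/1} F^*$) and symmetrically for $T_F$.

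The operator $T_E$ acts on $x \in \bS_{\lambda/1} E$ by: insert $\sum_i e_i \otimes e_i^*$, multiply $e_i$ into $\bS_{\lambda/1} E$ to land in $\bS_\lambda E$, then comultiply an $E$-factor back out and pair it with $e_i^*$. Using the identity $\sum_i a_i \circ a_i^* = m \circ \Delta$ on $\bS_\lambda E$ (with $a_i$ denoting multiplication by $e_i$ and $a_i^*$ its comultiplicative adjoint), which by Schur's lemma is a scalar $c_\lambda$ computable via Pieri's rule on a highest weight vector, together with a compatibility argument using the full $\fgl(V)$-equivariance of the composition to propagate uniformity across the (potentially reducible) summands of $\bS_{\lambda/1} E$, one obtains that $T_E$ acts by the scalar $\dim E + \lambda_1 - \lambda_1^\dagger$. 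Symmetrically, $T_F$ acts by $\dim F + \mu_1 - \mu_1^\dagger$.

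The total scalar is therefore $(\dim E + \lambda_1 - \lambda_1^\dagger) - (\dim F + \mu_1 - \mu_1^\dagger)$, which vanishes exactly under the hypothesis. The main obstacle is the uniform-scalar claim for $T_E$ on the reducible skew-Schur functor $\bS_{\lambda/1} E$: this requires either leveraging the full $\fgl(V)$-equivariance to rule out distinct eigenvalues on different $\fgl(E)$-isotypic summands, or doing a direct Pieri-rule calculation on each summand — either way, careful bookkeeping with Schur functor multiplication and comultiplication is the technical heart of the argument.
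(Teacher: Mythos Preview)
Your overall strategy---split the composition as $T_E - T_F$ and show each piece is the claimed scalar---is exactly what the paper does. The issue is in the execution.

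Your description of $T_E$ as ``multiply $e_i$ into $\bS_{\lambda/1}E$, comultiply out, pair with $e_i^*$'' is not the map in \eqref{eqn:glmorphisms}. By the construction (analogous to Proposition~\ref{prop:pemaps}), the trace inserts $e_i$ into the first column on the $\lambda$-side \emph{and} $e_i^*$ into the first column on the $\mu$-side; the evaluation then extracts from the first \emph{row} of both sides and pairs. So $T_E$ genuinely involves the $\mu$-side Schur functor: the inserted $e_i^*$ must survive the passage from $\bigwedge^\mu(V^*[1])$ down to $\bS_\mu(V^*[1])$, up into $\rS^\mu(V^*[1])$, and then be the entry extracted from the first row. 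Your one-sided operator $\sum_i a_i^* a_i$ does not see this interaction (and note also that the identity you invoke is for $\sum_i a_i a_i^*$, the opposite order). On top of this sits the reducibility problem you already flagged: neither your suggestion (a) nor (b) is carried out, and (a) is not obviously strong enough, since $\fgl(V)$-equivariance only relates different homological degrees rather than different $\fgl(E)\times\fgl(F)$-summands within degree~$1$.

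The paper cuts through both issues by computing directly on the standard tableau basis. One fixes a pair $(T_e, T_f)$ and a single term in the column antisymmetrization on each side, then counts how many first-row extractions pair nontrivially; the answer is $\dim E - \lambda^\dagger_1 + \lambda_1$ from the $e_i$-terms and $\dim F - \mu^\dagger_1 + \mu_1$ from the $f_j$-terms, uniformly on every basis element, with no need to decompose the domain into irreducibles or to isolate one side of the tensor product.
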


\begin{example}
When $\lambda = \mu = (1)$, this is saying that the composition of trace and evaluation is 0 when $\dim E = \dim F$, which is easily seen.
\end{example}

\begin{proof} 
Using the standard basis of \cite[Proposition 2.1.15(b)]{weyman}, an element in homological degree 1 in the left hand side of \eqref{eqn:glmorphisms} is a sum of pairs of tableaux $(T_e, T_f)$ of shapes $(\lambda/1, \mu/1)$ and whose entries are filled with $\{e_1, \dots, e_n\}$ and $\{f^*_1, \dots, f^*_m\}$, respectively. So fix such a pair. Now we apply the map \eqref{eqn:glmorphisms}. 

The trace map says to sum over the tableaux we get by inserting $e_i \otimes e_i^*$ and $-f_j \otimes f_j^*$ into the empty boxes of $(T_e, T_f)$. When we insert $v$ into the empty box of $T_e$, we denote it by $vT_e$ (same for $T_f$). So we can write the trace map as 
\[
(T_e, T_f) \mapsto \sum_i (e_iT_e, e_i^*T_f) - \sum_j (f_j T_e, f_j^* T_f)
\]
The next part of the map tells us to antisymmetrize all columns. In detail, for each pair of permutations $(\sigma, \rho)$ of the boxes of $\lambda$ and $\mu$, we sum over those which preserve the columns, and multiply by the sign of the permutation. In symbols:
\[
(e_i T_e, e_i^* T_f) \mapsto \sum_{\sigma, \rho} {\rm sgn}(\sigma) {\rm sgn}(\rho) (\sigma \cdot e_i T_e, \rho \cdot e_i^* T_f).
\]
Fix a term in this sum, we will show that its image is 0.

To each term in this sum, we symmetrize rows (i.e., interpret them as monomials in symmetric powers) and then pick one element from the first row of both tableaux in all possible ways and evaluate them against one another. When we do this to $(e_iT_e, e_i^*T_f)$, we can only get a nonzero result if we pick $e_i^*$ in the first row of $e_i^* T_f$ (this might not be possible depending on the signed term we picked from the antisymmetrization). When we pick this $e_i^*$, then the sum of all possible evaluations is 1 plus the number of times that $e_i$ appears in the first row of the fixed antisymmetrization of $T_e$. The only $i$ that can contribute are those such that $e_i$ does not appear in the first column of $T_e$ (otherwise $e_iT_e$ would have two instances of $e_i$ in the same column and be identically 0). So summing over all $i$ such that $i$ is not in the first column of $T_e$, we get $(\dim E - \lambda^\dagger_1 + 1) + (\lambda_1 - 1) = \dim E - \lambda^\dagger_1 + \lambda_1$ contributions. Similarly, considering $(f_j T_e, f_j^* T_f)$, we get $\dim F - \mu^\dagger_1 + \mu_1$ contributions, each having coefficient $-1$. So the total coefficient is 0.
\end{proof}

We make the following definitions:
\begin{equation}
\begin{split}
k_{\lambda; \mu}(V) &= \ker(\bS_\lambda V \otimes \bS_\mu (V^*[1]) \to (\bS_{\lambda/1} V \otimes \bS_{\mu/1} (V^*[1]))[1])\\
i_{\lambda;\mu}(V) &= \im((\bS_{\lambda/1} V \otimes \bS_{\mu/1} (V^*[1]))[1] \to \bS_\lambda V \otimes \bS_\mu (V^*[1]))\\
\bS_{[\lambda; \mu]} V &= k_{\lambda; \mu}(V) / (k_{\lambda; \mu}(V) \cap i_{\lambda; \mu}(V)).
\end{split}
\end{equation}

\subsection{Existence of representation structure.} \label{sec:gl_exist}

\begin{proof}[Proof of Theorem~\ref{thm:lascoux_action}] 
Recall the definitions of $P_{r,s}(\alpha,\beta)$ and $Q_{r,s}(\alpha, \beta)$ from \eqref{eqn:PQdefn}. Take $\lambda = (s^{\dim E-r-s})$ and $\mu = (s^{\dim F-r-s})$. Then 
\begin{align*}
\bS_\lambda E &= \bS_{P_{r,s}(\emptyset, \emptyset)} E^* \otimes (\det E)^s\\
\bS_\mu F^* &= \bS_{Q_{r,s}(\emptyset, \emptyset)} F \otimes (\det F^*)^s
\end{align*}
\cite[Exercise 2.18]{weyman}. In \S\ref{sec:lascoux_complex}, we discussed that $\La_\bullet^{r,s} \otimes A$ can be realized as a linearly exact $A$-linear complex, which we denote by $\uLa_\bullet^{r,s}$. We now interpret $\bS_\lambda V$, $\bS_\mu (V^*[1])$, and $\bS_{[\lambda;\mu]} V$ as 2-sided complexes. The first two terms of $\bS_\lambda V \otimes (\det E^*)^s$ are (write $(\alpha; \beta)$ in place of $\bS_\alpha E^* \otimes \bS_\beta F$)
\[
((s^{s+r},1);1) \to (s^{s+r}; \emptyset)
\]
Similarly, the first two terms of $\bS_\mu (V^*[1]) \otimes (\det F)^s$ are
\[
(1; (s^{s+r},1)) \to (\emptyset; s^{s+r})
\]
So the first two terms of $\bS_{\lambda} V \otimes \bS_{\mu} (V^*[1]) \otimes (\det E^*)^s \otimes (\det F)^s$ are 
\[
\begin{array}{c} ((s^{s+r},1); (s+1,s^{s+r-1})) \oplus \\ 
((s^{s+r},1); (s^{s+r},1)) \oplus \\ 
((s^{s+r},1); (s^{s+r},1)) \oplus \\ 
((s+1,s^{s+r-1}); (s^{s+r},1))
\end{array} \to (s^{s+r}; s^{s+r})
\]
Also, $((s^{s+r},1); (s^{s+r},1))$ is the 0th term of $\bS_{\lambda/1} V \otimes \bS_{\mu/1}(V^*[1]) \otimes (\det E^*)^s \otimes (\det F)^s$. Our choice of $\lambda$ and $\mu$ satisfies Proposition~\ref{prop:glcomplex}, so neither instance of $((s^{s+r},1); (s^{s+r},1))$ remains when we pass to $\tilde{\bS}_{[\lambda; \mu]} V := \bS_{[\lambda; \mu]} V \otimes (\det E^*)^s \otimes (\det F)^s$.

Hence we get a surjection 
\[
\rH_0(\tilde{\bS}_{[\lambda; \mu]} V) \to \rH_0(\uLa_\bullet^{r,s}).
\]
The rest of the proof is essentially the same as the proof of Theorem~\ref{thm:periplecticaction}, so we omit the details.
\end{proof}

\bigskip

\small \noindent Steven V Sam, Department of Mathematics,
University of California, Berkeley, CA \\
{\tt svs@math.berkeley.edu}, \url{http://math.berkeley.edu/~svs/}

\end{document}